\newcommand {\smat}      [1] {\left[\begin{smallmatrix}{#1}}
	\newcommand {\srix}          {\end{smallmatrix}\right]}
\def\bkR{{\rm I\kern-.17em R}}
\def\bkC{{\rm ^{_|}\kern-.47em C}}
\theoremstyle{definition}
\newtheorem{definition}{Definition}[section]
\theoremstyle{theorem}
\newtheorem{theorem}[definition]{Theorem}
\newtheorem{lemma}[definition]{Lemma}
\newtheorem{proposition}[definition]{Proposition}
\newtheorem{remark}[definition]{Remark}
\theoremstyle{definition}
\newtheorem{example}[definition]{Example}
\DeclareMathOperator{\diff}{d}
\newcommand{\dx}{{\diff}x}
\newcommand{\ds}{{\diff}s}
\newcommand{\dt}{{\diff}t}
\newcommand{\dW}{{\diff}W}
\newcommand{\expn}{\operatorname{e}}
\newcommand{\vect}{\operatorname{vec}}
\newcommand{\beq}{\begin{equation}}
	\newcommand{\eeq}{\end{equation}}
\newcommand {\mat}      [1] {\left[\begin{array}{#1}}
	\newcommand {\rix}          {\end{array}\right]}
\newcommand {\s}      [1] {\begin{smallmatrix}{#1}}
	\newcommand {\se}          {\end{smallmatrix}}
\newcommand{\trace}{\operatorname{tr}}
\title{(Empirical) Gramian-based dimension reduction for stochastic differential equations driven by fractional Brownian motion}
\author{
	Nahid Jamshidi\thanks{Martin Luther University Halle-Wittenberg, Institute of Mathematics, Theodor-Lieser-Str. 5, 06120 Halle (Saale), Germany (Email: {\tt
			Nahid.Jamshidi@mathematik.uni-halle.de})}
	\and Martin Redmann\thanks{Martin Luther University Halle-Wittenberg, Institute of Mathematics, Theodor-Lieser-Str. 5, 06120 Halle (Saale), Germany (Email: {\tt
			martin.redmann@mathematik.uni-halle.de}); corresponding author}
}
\begin{document}
	
	\maketitle
	
	\begin{abstract}
		In this paper, we investigate large-scale linear systems driven by a fractional Brownian motion (fBm) with Hurst parameter $H\in [1/2, 1)$. We interpret these equations either in the sense of Young ($H>1/2$) or Stratonovich ($H=1/2$). Especially fractional Young differential equations are well suited for modeling real-world phenomena as they capture memory effects, unlike other frameworks. Although it is very complex to solve them in high dimensions, model reduction schemes for Young or Stratonovich settings have not yet been studied much.
		To address this gap, we analyze important features of fundamental solutions associated to the underlying systems. We prove a weak type of semigroup property which is the foundation of studying system Gramians. From the introduced Gramians, dominant subspace can be identified which is shown in this paper as well. The difficulty for fractional drivers with $H>1/2$ is that there is no link of the corresponding Gramians to algebraic equations making the computation very difficult. Therefore, we further propose empirical Gramians that can be learned from simulation data. Subsequently, we introduce projection-based reduced order models using the dominant subspace information. We point out that such projections are not always optimal for Stratonovich equations as stability might not be preserved and since the error might be larger than expected. Therefore, an improved reduced order model is proposed for $H=1/2$. We validate our techniques conducting numerical experiments on some large-scale stochastic differential equations driven by fBm resulting from spatial discretizations of fractional stochastic PDEs. Overall, our study provides useful insights into the applicability and effectiveness of reduced order methods for stochastic systems with fractional noise, which can potentially aid in the development of more efficient computational strategies for practical applications.
	\end{abstract}
	%% maketitle must follow the abstract.
	\maketitle                   % Produces the title.
	\section{Introduction}
	
	Model order reduction (MOR) is an important tool when it comes to solving high-dimensional dynamical systems. MOR is for instance exploited in the optimal control context, where many system evaluations are required and is successfully used in various other applications. There has been an enormous interest in these techniques for deterministic equations. Let us refer  to \cite{morAnt05, benner2017model}, where an overview on different approaches is given and further references can be found. MOR for Ito stochastic differential equations is also very natural thinking of computationally very involved techniques like Mont-Carlo methods. There has been a vast progress in the development for MOR schemes in the Ito setting. Let us refer to \cite{redmannbenner, mliopt, pod_sde} in order to point out three different approaches in this context.\smallskip
	
	In this paper, we focus on MOR for stochastic systems driven by fractional Brownian motions and with non zero initial data. A fractional Brownian motion is an excellent candidate for simulating various phenomena in practice due to its self-similarity and the long-range dependency. However, when $H\neq \frac{1}{2}$, the process $W^H$ is neither a semimartingale nor a Markov process. These are the main obstacles when MOR techniques are designed for such systems. The dimension reduction, we focus on, is conducted by identifying the dominant subspaces using quadratic forms of the solution of the stochastic equation. These matrices are called Gramians. By characterizing the relevance of different state directions using Gramians, less important information can be removed to achieve the desired reduced order model (ROM). Our work considers various types of Gramians depending on the availability in the different settings. Exact Gramians on compact intervals $[0, T]$ as well as on infinite time horizons are studied. These have previously been used in deterministic frameworks \cite{morGawJ90, morMoo81} or Ito stochastic differential equations \cite{beckerhartmann, morBenD11, redmannbenner, redmann2022gramian}. Given the Young case of $H>1/2$, the fractional driver does not have independent increments making it hard to extend the concept of Gramians to this setting. One of our contributions is the analysis of fundamental solutions of Young differential equations. We prove a weak form of semigroup property which is the basis for a proper definition of Gramians for $H>1/2$. Subsequently, we show that certain eigenspaces of these Gramians are associated to dominant subspaces of the system.
	However, this approach is still very challenging from the computational point of view for fractional Brownian motions with $H>1/2$. This is due to a missing link of the here proposed exact Gramians to Lyapunov equations. This is for the reason that the increments of the driver are not independent. Therefore, empirical Gramians based on simulation data are introduced. Computing this approximation of the exact Gramians is still challenging yet vital since they are needed for deriving the ROM. We further point out, how exact Gramians can be computed for Stratonovich stochastic differential equations. Here, the equivalence to Ito equations is exploited. Although we show that these Gramians identify redundant information in Stratonovich settings, MOR turns out to be not as natural as in the Ito case. In fact, we illustrate that projection-based dimension reduction for Stratonovich equations leads to ROMs that lack important properties. For instance, stability might not be preserved in the ROM and the error does not solely depend on the truncated eigenvalues of the Gramians. This indicates that there are situations in which the projection-based ROM performs poorly. For that reason, we propose a modification of the ROM having all these nice properties known for Ito equations (stability and meaningful error bounds).\smallskip
	
	The paper is structured as follows. In Section \ref{sec2}, we provide a quick introduction to fractional Brownian motions as well as the associated integration. In particular, we define Young's integral ($H>1/2$) and the integrals in the sense of Ito/Stratonovich ($H=1/2$). In Section \ref{stochstabgen}, we briefly discuss the setting and the general reduced system structure by projection. We give a first insight on how projection-based reduced systems need to be modified in order to ensure a better approximation quality in the Stratonovich setting. Section \ref{sec_fund_sol-Gram} contains a study of properties of the fundamental solution to the underlying stochastic system. A weak type of semigroup property leads to a natural notion of Gramians. We show that these Gramians indeed characterize dominant subspaces of the system and are hence the basis of our later dimension reduction. Since exact Gramian are not available for each choice of $H$, we discuss several modifications and approximations in this context. Strategies on how to compute Gramians for Stratonovich equations are delivered as well. In Section \ref{sec4}, we describe the concept of balancing for all variations of Gramians that we have proposed. A truncation procedure then yields a ROM. We further point out that the truncation method is not optimal in the Stratonovich case ($H=1/2$) and suggest an alternative that is based on transformation into the equivalent Ito framework. Finally, in Section \ref{sec5}, we utilize the methods described in the previous sections to solve stochastic heat and wave equations with fractional noises. This section presents the results of our simulations and demonstrates the effectiveness of the proposed methods in solving these equations with various noise cases.

	\section{Fractional Brownian motion and Young/Stratonovich integration}\label{sec2}
	
	Below, it is assumed that all stochastic processes occurring in this paper are defined on the filtered probability space $ (\Omega, \mathcal{F},(\mathcal{F}_t)_{t\in[0,T]},\mathbb{P})$. Our main focus is on the fractional Brownian motion (fBm) $W^H(t)$, $t\geq 0$, with Hurst parameter $H\in(0,1)$. It is a Gaussian process with mean zero and covariance function given by
	\begin{align}\label{cov_fbm}
		\mathbb{E}[W^H(t)W^H(s)]=\frac{1}{2}\left(s^{2H}+t^{2H}-|t-s|^{2H}\right).\end{align}
	The fBm was initially proposed by Kolmogorov, and it was later investigated by Mandelbrot and Van Ness, who developed a stochastic integral representation of it using a standard Brownian motion. Additionally, Hurst used statistical analysis of annual Nile river runoffs to create the Hurst index, which is a resulting parameter.
	
	The fBm exhibits self-similarity, which means that the probability distributions of the processes $a^{-H}W^H(at)$, $t\geq 0$, and $W^H(t)$, $t\geq 0$, are the same for any constant $a>0$, which is a direct consequence of the covariance function. The increments of the fBm are stationary and, if $H=1/2$, they are also independent.
	
	The Hölder continuity of the fBm trajectories can be calculated using the modulus of continuity described in \cite{Garsia}. To be more precise, we find a non-negative random variable $G_{\epsilon,T}$ for all $\epsilon>0$ and $T>0$, so that
	\[|W^H(t)-W^H(s)|\leq G_{\epsilon,T}|t-s|^{H-\epsilon},\]
	almost surely, for all $s,t \in [0,T]$.
	Therefore, the Hurst parameter $H$ not only accounts for the correlation of the increments but also characterizes the regularity of the sample paths. In other words, the trajectories are Hölder continuous with parameter arbitrary close to $H$.
	
	In the following, we will always consider $H\geq 1/2$ and briefly recall the corresponding integration theory. In order to cover the ``smooth case'' $H>1/2$, the integral defined by Young \cite{Young} in 1936 is considered. This scenario covers integrands and integrators with a certain Hölder regularity.
	\begin{definition}\label{defn_young}
		Let $ C^{\alpha}  $ be the set of Hölder continuous functions defined on $ [0, T] $, with index $ 0<\alpha \leq 1 $.
		Suppose that $ f \in C^{\alpha} $  and $ g \in C^{\beta}$, where  $  \alpha + \beta > 1 $. Given a sequence $(t_i^n )^{k_n}_{i=0}$  of partitions of $[0, T]$ with $\lim_{n\to\infty}\max_{i=0}^{k_n-1}\{t_{i+1}^n-t_{i}^n\}=0$. Then, the Young integral $ \int_0^T f(s) dg(s) $ is then defined as
		\[ \int_0^T f(s) dg(s):=\lim_{n\rightarrow \infty} \sum_{i=0}^{k_n-1}f(t_i^n)\big[g(t_{i+1}^n)-g(t^n_i)\big].\]
		As the paths of $W^H$ are a.s. Hölder continuous with  $\alpha=H-\epsilon$, we define $\int_0^T Y(s)\circ \dW^H(s)$ for processes $Y$ with $(H-\epsilon)$-Hölder continuous trajectories path-wise in the sense of Young if $H>1/2$.
	\end{definition}
	$H=1/2$ represents the boundary case, in which Young integration does not work anymore. For that reason, the probabilistic approach of Stratonovich is chosen in the following.
	\begin{definition}\label{defn_strat}
		Let $H=1/2$ and $(t_i^n )^{k_n}_{i=0}$ a partition like in Definition \ref{defn_young}. Given a continuous semimartingale $Y$, we set
		\begin{align*}
			\int_0^T Y(s)\circ \dW^H(s)   := \int_0^T Y(s) \dW^H(s) + \frac{1}{2}[Y, W^H]_T,                                                                                                                                                                                                                                                                                                                                                                                                                                                                                                                                                                                                                                                                                                                      \end{align*}
		where the first term is the Ito integral $\int_0^T Y(s) \dW^H(s):=\mathbb P-\lim_{n\rightarrow \infty} \sum_{i=0}^{k_n-1}Y(t_i^n)\big[W^H(t_{i+1}^n)-W^H(t^n_i)\big]$ and $[Y, W^H]_T:=\mathbb P-\lim_{n\rightarrow \infty} \sum_{i=0}^{k_n-1}\big[Y(t_{i+1}^n)-Y(t_i^n)\big]\big[W^H(t_{i+1}^n)-W^H(t^n_i)\big]$ is the quadratic covariation. The expression ``$\mathbb P-\lim$'' indicates the limit in probability.
	\end{definition}
	Let us refer to, e.g., \cite{kloeden_platen, oksendal2013stochastic} for more details concerning stochastic calculus given $H=1/2$.
	The Stratonovich integral can be viewed as the natural extension of Young, since the Stratonovich setting still ensures having a ``classical'' chain rule. Moreover, $W^H$, $H=1/2$, can be approximated by ``smooth'' processes $W^{H, \epsilon}$ with bounded variation paths when Stratonovich stochastic differential equations are considered, e.g.,  $W^{H, \epsilon}$ can be piece-wise linear (Wong-Zakai) \cite{wong_zakai_phd, wong_zakai}. Due to these connections and in order to distinguish from the Ito setting, we use the circle notation $\circ \dW^H$ for both the Young and the Stratonovich case. It is worth mentioning that the lack of martingale property makes the analysis of such integrals particularly challenging, and might require advanced mathematical techniques such as Malliavin calculus, see for instance \cite{Alos}. Nevertheless, Young and Stratonovich differential equations driven by a fBm have important applications in various fields.
	
	\section{Setting and (projection-based) reduced system}\label{stochstabgen}
	We consider the following Young/Stratonovich stochastic differential equation controlled by $u$ satisfying $\|u\|_{L^2_T}^2:= \mathbb E\int_0^T \|u(t)\|_2^2 \dt<\infty$:
	\begin{equation}\label{goal_a}
		\begin{split}
			\dx(t) &=[Ax(t)+Bu(t)]\dt+\sum_{i=1}^qN _i x(t)\circ\dW_{i}^H(t), \quad x(0)=x_0=X_0z,\\
			y(t)&=Cx(t),\quad t\in[0,T],
		\end{split}
	\end{equation}
	where $ A,N_i\in \mathbb{R}^{n\times n} $, $ B\in\mathbb{R}^{n\times m} $, $ C\in \mathbb{R}^{p\times n}  $, $X_0\in \mathbb{R}^{n\times v} $, $z\in\mathbb{R}^v$ and $T>0$ is the terminal time. $W_1^H,\dots,W_q^H $ are independent fBm with Hurst index $H\in[1/2,1)$.
	System \eqref{goal_a} is defined as an integral equation using Definitions \ref{defn_young} ($H>1/2$) and \ref{defn_strat} ($H=1/2$)  to make sense of $\int_0^t N_ix(s)\circ\dW_i^H(s)$.
	
	For the later dimension reduction procedure, it can be beneficial to rewrite the Stratonovich setting in the Ito form. Given $H=1/2$, the state equation in \eqref{goal_a} is equivalent to the Ito equation
	\begin{equation}\label{stratonovich}
		\dx(t) =[(A+\frac{1}{2}\sum_{i=1}^q N_i^2)x(t)+Bu(t)]\dt+\sum_{i=1}^q N _i x(t)\dW_{i}^H(t)
	\end{equation}
	exploiting that the quadratic covariation process is $\sum_{i=1}^q\int_0^t N_i^2 x(s)ds$, $t\in [0, T]$.
	
	The goal of this paper is to find a system of reduced order. This can be done using projection methods, in which a subspace spanned by the columns of $ V\in \mathbb{R}^{n\times r} $ is identified, so that  $x(t)\approx Vx_r(t)$. Inserting this into \eqref{goal_a} yields
	\begin{equation}\label{er}
		Vx_r(t) = X_0z+ \int_0^t [AV x_r(s)+Bu(s)]\ds+\sum_{i=1}^q \int_0^t N_iV x_r(s)\circ\dW^H_i(s)+e(t),\quad y_r(t) =CVx_r(t),
	\end{equation}
	We enforce the error $e(t)$ to be orthogonal to some space spanned by columns of $W\in\mathbb{R}^{n\times r}$, for which we assume that $ W^\top V=I$. Multiplying \eqref{er} with $W^\top$ from the left yields
	\begin{equation}\label{reduced}
		\begin{split}
			dx_r(t) &=[A_r x_r(t)+B_r u(t)]\dt+\sum_{i=1}^q N_{i,r} x_r(t)\circ\dW_{i}^H(t), \quad x_r(0)=x_{0,r}= X_{0, r} z,\\
			y_r(t)&=C_r x_r(t),\quad t\in[0,T],
		\end{split}
	\end{equation}
	where $X_{0, r}= W^\top X_0$ and
	\[A_r=W^\top AV,\quad B_r=W^\top B,\quad N_{i,r}=W^\top N_iV,\quad C_r=CV.\]
	This type of approximation can be interpreted as a Petrov-Galerkin projection. If $W=V$ has orthonormal columns, we obtain a Galerkin approximation. On the other hand, we want to point out that reduced order systems can also be of a different form when $H=1/2$. Inserting $x(t)\approx Vx_r(t)$ into \eqref{stratonovich} instead of \eqref{goal_a} and conducting the same Petrov-Galerkin procedure, we obtain a reduced Ito system with drift coefficient $A_r+ \frac{1}{2}\sum_{i=1}^q W^\top N_i^2 V$. Transforming this back into a Stratonovich equation yields \begin{align}\label{alternative_reduced}
		d\bar x_r(t) =[\big(A_r+\frac{1}{2}\sum_{i=1}^q (W^\top N_i^2 V- N_{i,r}^2)\big)\bar x_r(t)+B_r u(t)]\dt+\sum_{i=1}^q N_{i,r} \bar x_r(t)\circ\dW_{i}^H(t),                                                                                                                                                                                                                                                                                                                                                                                                                                                                                                                                                                                                                                                                                                                                                                                                                                                                                                                 \end{align}
	which is clearly different from the state equation in \eqref{reduced}. This is due to the Ito-Stratonovich correction not being a linear transformation. Another goal of this paper is to analyze whether $x_r$ or $\bar x_r$ performs better for $H=1/2$.
	
	\section{Fundamental solutions and Gramians}\label{sec_fund_sol-Gram}
	
	\subsection{Fundamental solutions and their properties}
	
	Before we are able to compute suitable reduced systems, we require fundamental solutions $\Phi$. These $\Phi$ will later lead to the concept of Gramians that identify dominant subspaces.
	The fundamental solution associated to \eqref{goal_a} is a two parameter matrix valued stochastic process $\Phi$ solving
	\begin{equation}\label{fun_sol}
		\Phi(t,s)=I+\int_s^tA\Phi(\tau,s)d\tau+\sum_{i=1}^q\int_s^tN_i\Phi(\tau,s)\circ\dW^H_i(\tau)
	\end{equation}
	for $ t\geq s\geq 0 $. For simplicity, we set $\Phi(t):=\Phi(t,0)$ meaning that we omit the second argument if it is zero. We can separate the variables, since we have $ \Phi(t,s)=\Phi(t)\Phi(s)^{-1} $ for $ t\geq s\geq 0 $. This is due to the fact that $ \Phi(t)\Phi^{-1}(s) $ fulfills equation \eqref{fun_sol}. Now, we derive the solution of the state equation \eqref{goal_a} in the following proposition.
	\begin{proposition}\label{Phi_inv}
		The solution of the state equation \eqref{goal_a} for $ H\in[1/2, 1)$ is given by
		\begin{equation}\label{Ex_sol}
			x(t)=\Phi(t)x_0+\int_0^t \Phi(t, s) B u(s)\ds,\quad t\in [0, T].\end{equation}
	\end{proposition}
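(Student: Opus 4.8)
The plan is to verify the variation-of-constants formula \eqref{Ex_sol} by direct substitution, using the separation property $\Phi(t,s)=\Phi(t)\Phi(s)^{-1}$ recorded above together with the classical (correction-free) product rule available in both the Young and the Stratonovich settings. First I would factor the candidate solution as
\[
x(t)=\Phi(t)\Big(x_0+\int_0^t \Phi(s)^{-1}Bu(s)\,\ds\Big)=:\Phi(t)\,g(t),
\]
where $g(t):=x_0+\int_0^t \Phi(s)^{-1}Bu(s)\,\ds$. The purpose of this factorization is that $g$ is an absolutely continuous (hence finite-variation) process with $\diff g(t)=\Phi(t)^{-1}Bu(t)\,\dt$, so that the rough/stochastic behaviour is carried entirely by $\Phi$. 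Rewriting $\Phi(t)g(t)=\Phi(t)x_0+\int_0^t\Phi(t)\Phi(s)^{-1}Bu(s)\,\ds$ and invoking $\Phi(t)\Phi(s)^{-1}=\Phi(t,s)$ shows this factored form coincides with \eqref{Ex_sol}.

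Next I would differentiate the product $\Phi(t)g(t)$. Since $\Phi(t)=\Phi(t,0)$ solves \eqref{fun_sol} with $s=0$, it obeys $\diff\Phi(t)=A\Phi(t)\,\dt+\sum_{i=1}^q N_i\Phi(t)\circ\dW^H_i(t)$ with $\Phi(0)=I$. Because $g$ has finite variation, the product rule holds in the form $\diff\big(\Phi(t)g(t)\big)=(\diff\Phi(t))\,g(t)+\Phi(t)\,\diff g(t)$ with \emph{no} extra cross term: in the Stratonovich case this is the statement that the quadratic covariation of a finite-variation process with any semimartingale vanishes, while in the Young case it is the Riemann--Stieltjes/Young integration-by-parts formula, legitimate since the joint-regularity condition $\alpha+\beta>1$ is automatic once one factor has finite variation. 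Substituting $\diff\Phi(t)$ and $\diff g(t)=\Phi(t)^{-1}Bu(t)\,\dt$ then gives
\[
\diff x(t)=A\Phi(t)g(t)\,\dt+\sum_{i=1}^q N_i\Phi(t)g(t)\circ\dW^H_i(t)+\Phi(t)\Phi(t)^{-1}Bu(t)\,\dt.
\]
Recognizing $\Phi(t)g(t)=x(t)$ collapses this to $\diff x(t)=[Ax(t)+Bu(t)]\,\dt+\sum_{i=1}^q N_i x(t)\circ\dW^H_i(t)$, which is exactly the state equation in \eqref{goal_a}. The initial condition is immediate, since $x(0)=\Phi(0)g(0)=I\,x_0=x_0$, and uniqueness of solutions then identifies the constructed process with the solution of \eqref{goal_a}.

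The main obstacle I expect is the rigorous justification of the product rule and the vanishing of the cross term in the Young regime $H>1/2$, not the algebra. One must check that the two factors carry enough Hölder regularity for the Young product formula to apply: $\Phi$ inherits $(H-\epsilon)$-Hölder continuity from the driver, whereas $g$, being the antiderivative of $\Phi(\cdot)^{-1}Bu(\cdot)$ with $u\in L^2_T$, is at worst $\tfrac12$-Hölder by the Cauchy--Schwarz/Sobolev estimate $\big|\int_s^t\Phi(r)^{-1}Bu(r)\,\dr\big|\le |t-s|^{1/2}\,\|\Phi^{-1}Bu\|_{L^2}$. Since $H>1/2$ permits choosing $\epsilon$ small enough that $(H-\epsilon)+\tfrac12>1$, the product formula is applicable and no correction arises. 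A secondary point worth spelling out is the a.s.\ invertibility and boundedness of $\Phi(s)^{-1}$ on $[0,T]$, which makes $\Phi(s)^{-1}Bu(s)$ integrable and hence $g$ well defined; this invertibility is precisely what underlies the separation property $\Phi(t,s)=\Phi(t)\Phi(s)^{-1}$ already used above.
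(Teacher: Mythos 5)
Your proposal is correct and follows essentially the same route as the paper: both define $k(t)=x_0+\int_0^t\Phi(s)^{-1}Bu(s)\,\ds$ (your $g$), apply the correction-free classical product rule to $\Phi(t)k(t)$, and identify the result with the solution of \eqref{goal_a} via $\Phi(t,s)=\Phi(t)\Phi(s)^{-1}$. The extra regularity and invertibility checks you flag (Hölder exponents summing above one, a.s.\ invertibility of $\Phi$) are a welcome elaboration of what the paper leaves implicit, but they do not change the argument.
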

	\begin{proof}
		Defining $k(t)=x_0+\int_0^t \Phi(s)^{-1} B u(s)\ds$, the result is an immediate consequence of applying the classical product rule (available in the Young/Stratonovich case) to
		$ \Phi(t)k(t), t\in [0, T]$. It follows that
		\begin{align*}    \Phi(t)k(t)&=x_0+\int_0^t \Phi(s)dk(s)+\int_0^t (d\Phi(s)) k(s)\\
			&=x_0+\int_0^t Bu(s)\ds+ \int_0^tA\Phi(s)k(s)\ds+ \sum_{i=1}^q\int_0^t N_i\Phi(s)k(s)\circ \dW^H_i(s),
		\end{align*}
		meaning that $\Phi(t)k(t)$, $t\in [0, T]$, is the solution to \eqref{goal_a}. The result follows by $\Phi(t,s)=\Phi(t)\Phi^{-1}(s)$.
	\end{proof}
	The fundamental solution lacks the strong semigroup feature compared to the deterministic case $ (N_i = 0)$. This means that,  $ \Phi(t,s) = \Phi(t-s) $ does not hold $\mathbb P$-almost surely, as the trajectories of $W^H$ on $ [0,t-s] $ and $ [s,t] $ are distinct. In the following lemma, we can demonstrate that the semigroup property holds in distribution exploiting the stationary increments of $W^H$.
	\begin{lemma}\label{Phi}
		It holds that the fundamental solution of \eqref{goal_a} satisfies
		\[\Phi(t,s)\overset{d}{=} \Phi(t-s), \quad t\geq s\geq 0.\]
	\end{lemma}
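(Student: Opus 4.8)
The goal is to show that the two-parameter fundamental solution $\Phi(t,s)$ has the same distribution as the one-parameter process $\Phi(t-s)$. The plan is to exhibit, for fixed $s$, a new driving noise relative to which $\Phi(\cdot,s)$ solves an equation of exactly the same form as the defining equation \eqref{fun_sol} for $\Phi(\cdot,0)$, and then invoke the stationarity of the increments of $W^H$ to conclude that the two noises agree in law.

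\medskip

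First I would fix $s\geq 0$ and set $\tau = t-s$, defining the time-shifted process $\Psi(\tau) := \Phi(s+\tau, s)$ for $\tau\geq 0$. Starting from \eqref{fun_sol} and performing the change of variables $r\mapsto r-s$ in each integral, I would rewrite the Lebesgue integral directly and rewrite each Young/Stratonovich integral against the shifted increments. Introducing $\widetilde W_i^H(\tau) := W_i^H(s+\tau) - W_i^H(s)$, this produces
\begin{equation*}
	\Psi(\tau) = I + \int_0^{\tau} A\,\Psi(\rho)\,d\rho + \sum_{i=1}^q \int_0^{\tau} N_i\,\Psi(\rho)\circ d\widetilde W_i^H(\rho),\qquad \tau\geq 0.
\end{equation*}
The key point is that this is formally \emph{identical} to \eqref{fun_sol} with $t$ replaced by $\tau$ and $W_i^H$ replaced by $\widetilde W_i^H$. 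Justifying the substitution in the stochastic integral is where a little care is needed: in the Young case ($H>1/2$) the integral is a pathwise Riemann--Stieltjes limit, so a shift of the partition points immediately gives the claimed identity for the shifted integrand and integrator; in the Stratonovich case ($H=1/2$) one checks the same for the Ito part and the quadratic covariation term separately, using that the shifted filtration still makes the integrand adapted.

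\medskip

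Next I would use the two defining features of fBm recalled in Section \ref{sec2}: the increments are stationary, so $\widetilde W_i^H(\cdot) = W_i^H(s+\cdot)-W_i^H(s)$ has the same law (as a process) as $W_i^H(\cdot)$; and the $q$ drivers are independent, so the vector $(\widetilde W_1^H,\dots,\widetilde W_q^H)$ has the same joint law as $(W_1^H,\dots,W_q^H)$. Since $\Psi$ is obtained from $(\widetilde W_i^H)_i$ by exactly the same solution map (the same $A$, $N_i$, same initial value $I$, same integration theory) that produces $\Phi(\cdot)=\Phi(\cdot,0)$ from $(W_i^H)_i$, equality of the driving noises in distribution transfers to equality of the solutions in distribution, i.e. $\Psi(\tau)\overset{d}{=}\Phi(\tau)$. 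Unwinding the definition $\Psi(\tau)=\Phi(s+\tau,s)$ and writing $t=s+\tau$ gives $\Phi(t,s)\overset{d}{=}\Phi(t-s)$, as claimed.

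\medskip

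The main obstacle I anticipate is not the probabilistic step but making the ``same solution map'' argument rigorous: one must know that the fundamental solution is a measurable, law-determined functional of the driving path, so that equality of noises in law forces equality of outputs in law. In the Young setting this follows from the pathwise continuity of the Young solution map with respect to the driver in Hölder norm; in the Stratonovich setting it follows from uniqueness of the solution together with continuity of the Ito/Stratonovich solution map. I would state this continuity/measurability as the one substantive ingredient and keep the change-of-variables and stationarity steps as routine verifications.
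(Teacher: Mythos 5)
Your argument is correct in outline but takes a genuinely different route from the paper. You work directly at the level of the continuous equation: shift time, identify the shifted driver $\widetilde W_i^H(\tau)=W_i^H(s+\tau)-W_i^H(s)$, note that $\Phi(s+\cdot,s)$ solves the \emph{same} equation driven by $\widetilde W^H$, and transfer equality in law from the driver to the solution via a law-determined solution map. The paper instead discretizes: it writes the Euler (resp.\ Euler--Maruyama) approximations of $\Phi(\cdot)$ on $[0,t-s]$ and of $\Phi(\cdot,s)$ on $[s,t]$ as the same explicit product function $F$ of the Gaussian increment vectors $Z$ and $Z^{(s)}$, verifies by direct computation with the covariance \eqref{cov_fbm} that $Z$ and $Z^{(s)}$ have identical covariance matrices (hence $F(Z)\overset{d}{=}F(Z^{(s)})$), and passes to the limit using known convergence of the scheme. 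The trade-off is clear: the paper's route is elementary and sidesteps any discussion of the solution map as a functional of the path, at the cost of invoking convergence results for the Euler scheme and an explicit covariance computation; your route is more conceptual and immediately gives equality in law at the level of processes, but hinges on the one substantive ingredient you correctly flag---that the solution is a measurable, law-determined functional of the driver. For $H>1/2$ this is indeed the pathwise continuity of the Young solution map in H\"older norm. For $H=1/2$ your phrase ``continuity of the Ito/Stratonovich solution map'' is not quite the right justification (the Ito map is not continuous in the supremum norm of the driver); the correct argument there is strong existence plus pathwise uniqueness (or uniqueness in law) of the linear Ito SDE obtained after the Stratonovich-to-Ito conversion, together with the fact that $\widetilde W^H$ is again a Brownian motion. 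Since you do also invoke uniqueness, this is an imprecision rather than a gap.
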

	\begin{proof}
		We consider $\Phi(\cdot)$ on the interval $[0, t-s]$ and $\Phi(\cdot,s)$ on $[s, t]$. Introducing the step size $\Delta t = \frac{t-s}{N}$, we find the partitions $t_k= k \Delta t$ and $t_k^{(s)}=s+t_k$, $k\in\{0, 1, \dots, N\}$, of $[0, t-s]$ and $[s, t]$.
		We introduce the Euler discretization of \eqref{fun_sol} as follows
		\begin{equation}\label{euler_mothod_young}
			\begin{split}
				\Phi_{k+1}&=\Phi_k+A\Phi_k\Delta t+\sum_{j=1}^q N_j\Phi_k\Delta W^{H}_{j, k},\\
				\Phi_{k+1}^{(s)}&=\Phi_k^{(s)}+A\Phi_k^{(s)}\Delta t+\sum_{j=1}^q N_j\Phi_k^{(s)}\Delta W^{H, (s)}_{j, k},
			\end{split}
		\end{equation}
		where we define $\Delta W^{H}_{j,k}=W^H_{j}(t_{k+1})-W^H_{j}(t_k)$ and $\Delta W^{H, (s)}_{j, k}=W^H_{j}(t_{k+1}^{(s)})-W^H_{j}(t_k^{(s)})$. According to \cite{Mishura, Neuenkirch}, the Euler scheme converges $\mathbb P$-almost surely for $H>1/2$ yielding in particular convergence in distribution, that is
		\begin{equation}\label{distribution}
			\Phi_N\xrightarrow{\enskip d\enskip} \Phi(t-s) ,\quad \Phi_N^{(s)}\xrightarrow{\enskip d \enskip} \Phi(t,s),
		\end{equation}
		as $N\rightarrow \infty$. The Euler method does not converge almost surely in the Stratonovich setting. However, for $H=1/2$, we can rewrite \eqref{fun_sol} as the Ito equation $
		\Phi(t,s)=I+\int_s^t(A+\frac{1}{2}\sum_{i=1}^q N_i^2)\Phi(\tau,s)d\tau+\sum_{i=1}^q\int_s^tN_i\Phi(\tau,s)\dW^H_i(\tau)$. This equation can be discretized by a scheme like in \eqref{euler_mothod_young} (Euler-Maruyama). The corresponding convergence is in $L^1(\Omega, \mathcal F, \mathbb P)$ \cite{kloeden_platen}, so that we also have \eqref{distribution} for $H=1/2$ as well.
		By simple calculation we can get from \eqref{euler_mothod_young} that
		\begin{equation*}
			\begin{split}
				\Phi_{N}&=\prod_{k=0}^{N-1}\left( I+A\Delta t+\sum_{j=1}^qN_j\Delta W^{H}_{j, k}\right)=:F(Z),\\
				\Phi_{N}^{(s)}&=\prod_{k=0}^{N-1}\left(I+A\Delta t+\sum_{j=1}^q N_j\Delta W^{H, (s)}_{j, k}\right)=F(Z^{(s)}),
			\end{split}
		\end{equation*}
		where $Z:=(\Delta W^{H}_{j, k})$ and $Z^{(s)}:=(\Delta W^{H, (s)}_{j, k})$ ($j=1, \dots, q$ and $k=0, \dots, N-1$) are Gaussian vectors with mean zero. Notice that the function $F$ is just slightly different for $H=1/2$, i.e., $A$ is replaced by $A+\frac{1}{2}\sum_{i=1}^q N_i^2$. It remains to show that the covariance matrices of $Z$ and $Z^{(s)}$ coincide leading to $\Phi_N(t,s)\overset{d}{=} \Phi_N(t-s)$. Subsequently, the result follows by \eqref{distribution}.
		Using the independence of $W^H_i$ and $W^H_j$ for $i\neq j$, the non zero entries of the covariances of $Z$ and $Z^{(s)}$ are
		$\mathbb{E}[\Delta W^{H}_{j, k} \Delta W^{H}_{j, \ell}]$ and $\mathbb{E}[\Delta W^{H, (s)}_{j, k} \Delta W^{H, (s)}_{j, \ell}]$  ($k, \ell = 0, 1, \dots, N-1$), respectively. These expressions are the same, since exploiting \eqref{cov_fbm}, we obtain that
		\begin{align*}
			\mathbb{E}[\Delta W^{H, (s)}_{j, k} \Delta W^{H, (s)}_{j, \ell}]&=\mathbb{E}[\big(W^{H}_{j}(s+t_{k+1})-W^{H}_{j}(s+t_k)\big)\big(W^{H}_{j}(s+t_{\ell+1})-W^{H}_{j}(s+t_\ell)\big)]\\
			&=\frac{1}{2}\left(\vert t_{k+1}-t_\ell \vert^{2 H}+\vert t_{k}-t_{\ell+1} \vert^{2 H}-\vert t_{k+1}-t_{\ell+1} \vert^{2 H}-\vert t_{k}-t_\ell\vert^{2 H}\right)
		\end{align*}
		is independent of $s$. This concludes the proof.
	\end{proof}
	
	\subsection{Exact and empirical Gramians}\label{sec_Gram}

	\subsubsection{Exact Gramians and dominant subspaces}
	
	Similar to the approach presented in the second POD-based method outlined in the reference \cite{Jamshidi}, our methodology involves partitioning the primary system described in equation \eqref{goal_a} into distinct subsystems in the following manner:
	\begin{align}\label{stan1}
		dx_u(t)&=[Ax_u(t)+Bu(t)]\dt+\sum_{i=1}^qN_i x_u(t) \circ dW_i^H(t), \quad x_u(0)=0,\quad
		y_u(t)=Cx_u(t), \\
		\label{stan2}
		dx_{x_0}(t)&= Ax_{x_0}(t)\dt+\sum_{i=1}^q N_i x_{x_0}(t)\circ dW_i^H(t), \quad x_{x_0}(0)=x_0=X_0z,\quad
		y_{x_0}(t)=Cx_{x_0}(t).
	\end{align}
	Proposition \ref{Phi_inv} shows that we have the representations $x_{x_0}(t)=\Phi(t)x_0$ and $x_u(t)=\int_0^t \Phi(t, s) B u(s)\ds$, so that $y(t)=y_{x_0}(t)+ y_u(t)$ follows. Lemma \ref{Phi} is now vital for a suitable definition of Gramians. Due to the weak semigroup property of the fundamental solution in Lemma \ref{Phi}, it turns out that
	\begin{equation}\label{subsys_gram}
		P_{u, T}:=\mathbb{E}\bigg[ \int_0^T\Phi(s)BB^\top\Phi(s)^\top\ds \bigg],\quad
		P_{x_0, T}:=\mathbb{E}\bigg[ \int_0^T\Phi(s)X_0X_0^\top\Phi(s)^\top\ds \bigg].
	\end{equation}
	are the right notion of Gramians for \eqref{stan1} and \eqref{stan2}. With \eqref{subsys_gram} we then define a Gramian $P_T:=P_{u, T}+P_{x_0, T}$ for the original state equation \eqref{goal_a}. In case of the output equation in \eqref{goal_a}, a Gramian can be introduced directly by
	\begin{align*}
		Q_{T}:=\mathbb E\int_0^{T} \Phi(s)^\top C^\top C \Phi(s) ds.
	\end{align*}
	\begin{proposition}\label{dominant_subspace_char}
		Given $v \in \mathbb R^n$, an initial state of the form $x_0=X_0 z$ and a deterministic control $u\in L^2_T$, then we have that \begin{align}\label{subsys_dom_space}
			\int_0^T\mathbb{E}\langle x_{x_0}(t), v\rangle^2_2\dt \leq  v^\top P_{x_0, T} v  \|z\|^2_2, \quad \sup_{t\in[0,T]}\mathbb{E}\vert\langle x_u(t), v\rangle_2\vert^2 \leq v^\top P_{u, T} v \|u\|_{L^2_T}^2
		\end{align}
		and consequently\begin{align}\label{full_state_dom}
			\int_0^T\mathbb{E}\langle x(t), v\rangle^2_2\dt \leq 2 v^\top P_{T} v \max\{\|z\|^2_2,T \|u\|_{L^2_T}^2\}.
		\end{align}
		Moreover, it holds that \begin{equation}\label{mu}
			\int_0^T \mathbb{E}\|C\Phi(t)v\|_2^2\dt=v^\top Q_T v.
		\end{equation}
	\end{proposition}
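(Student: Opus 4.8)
The plan is to treat the four claims in sequence: establish the two subsystem estimates in \eqref{subsys_dom_space}, assemble \eqref{full_state_dom} from them, and finally verify the output identity \eqref{mu} directly. Throughout I would rely on the representations $x_{x_0}(t)=\Phi(t)X_0 z$ and $x_u(t)=\int_0^t \Phi(t,s)Bu(s)\ds$ from Proposition \ref{Phi_inv}, together with the weak semigroup property $\Phi(t,s)\overset{d}{=}\Phi(t-s)$ of Lemma \ref{Phi}.

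For the initial-state estimate I would write $\langle x_{x_0}(t),v\rangle_2=v^\top\Phi(t)X_0 z$ and apply Cauchy--Schwarz in the form $\big(v^\top\Phi(t)X_0 z\big)^2\le\big(v^\top\Phi(t)X_0X_0^\top\Phi(t)^\top v\big)\,\|z\|_2^2$. Taking expectation, integrating over $[0,T]$, and recognizing the definition of $P_{x_0,T}$ in \eqref{subsys_gram} gives the first inequality at once; note that no appeal to Lemma \ref{Phi} is needed here because only the one-parameter $\Phi(t)$ occurs.

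The control estimate is the step I expect to be the main obstacle. Starting from $\langle x_u(t),v\rangle_2=\int_0^t v^\top\Phi(t,s)Bu(s)\ds$, I would apply Cauchy--Schwarz twice: pointwise in $s$ to extract $\|u(s)\|_2$, and then over the $s$-integral, obtaining $|\langle x_u(t),v\rangle_2|^2\le\big(\int_0^t\|B^\top\Phi(t,s)^\top v\|_2^2\,\ds\big)\big(\int_0^t\|u(s)\|_2^2\,\ds\big)$. The delicate point is that the left factor involves the two-parameter process $\Phi(t,s)$, whereas $P_{u,T}$ is written through the one-parameter $\Phi(\cdot)$. This is precisely where Lemma \ref{Phi} enters: since $\Phi(t,s)\overset{d}{=}\Phi(t-s)$, we have $\mathbb E\big[v^\top\Phi(t,s)BB^\top\Phi(t,s)^\top v\big]=\mathbb E\big[v^\top\Phi(t-s)BB^\top\Phi(t-s)^\top v\big]$. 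After taking expectation and substituting $\tau=t-s$, the left factor becomes $\int_0^t\mathbb E\big[v^\top\Phi(\tau)BB^\top\Phi(\tau)^\top v\big]\,d\tau\le v^\top P_{u,T}v$, while $\int_0^t\|u(s)\|_2^2\,\ds\le\|u\|_{L^2_T}^2$ since $u$ is deterministic. As this bound holds for every $t$, taking the supremum over $t\in[0,T]$ yields the second inequality.

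For \eqref{full_state_dom} I would combine the two estimates via $x(t)=x_{x_0}(t)+x_u(t)$ and the elementary inequality $(a+b)^2\le 2(a^2+b^2)$, so that $\int_0^T\mathbb E\langle x(t),v\rangle_2^2\dt$ is at most twice the sum of the two subsystem integrals. Bounding the control contribution by $\int_0^T\mathbb E|\langle x_u(t),v\rangle_2|^2\dt\le T\sup_{t\in[0,T]}\mathbb E|\langle x_u(t),v\rangle_2|^2$ and inserting both subsystem bounds reduces the right-hand side to $2v^\top P_{x_0,T}v\,\|z\|_2^2+2Tv^\top P_{u,T}v\,\|u\|_{L^2_T}^2$; factoring out $\max\{\|z\|_2^2,T\|u\|_{L^2_T}^2\}$ and using $P_T=P_{u,T}+P_{x_0,T}$ together with the positive semidefiniteness of both Gramians yields the claim. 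Finally, \eqref{mu} follows by direct expansion: $\|C\Phi(t)v\|_2^2=v^\top\Phi(t)^\top C^\top C\Phi(t)v$, so taking expectation and integrating against the definition of $Q_T$ produces an equality, no Cauchy--Schwarz being required.
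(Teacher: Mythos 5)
Your proposal is correct and follows essentially the same route as the paper's proof: Cauchy--Schwarz for the initial-state term, a double Cauchy--Schwarz plus Lemma \ref{Phi} (to replace $\mathbb E[\Phi(t,s)BB^\top\Phi(t,s)^\top]$ by $\mathbb E[\Phi(t-s)BB^\top\Phi(t-s)^\top]$, followed by the substitution $\tau=t-s$ and monotonicity in $T$) for the control term, the elementary $(a+b)^2\le 2(a^2+b^2)$ splitting with the $T\sup$ bound for \eqref{full_state_dom}, and direct expansion for \eqref{mu}. You correctly identify the control estimate as the step where the weak semigroup property is indispensable, and your handling of the deterministic control when pulling $\|u\|_{L^2_T}^2$ out of the expectation matches the paper.
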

	\begin{proof}
		The first relation is a simple consequence of the inequality of Cauchy-Schwarz and the representation of $x_{x_0}$ in Proposition \ref{Phi_inv}. Thus,
		\begin{align*}
			\int_0^T\mathbb{E}\langle x_{x_0}(t), v\rangle^2_2\dt&=\mathbb{E}\int_0^T\langle \Phi(t)X_0z, v\rangle^2_2\dt = \mathbb{E}\int_0^T\langle z, X_0^\top\Phi(t)^\top v\rangle_2^2\dt\leq  \|z\|^2_2 \mathbb E\int_0^T\| X_0^\top\Phi(t)^\top v\|^2_2\dt\\
			&=v^\top P_{x_0, T} v  \|z\|^2_2.
		\end{align*}
		Utilizing equation \eqref{Ex_sol} and the Cauchy-Schwarz inequality once more, we have
		\begin{equation*}
			\begin{split}
				\mathbb{E}\langle x_u(t), v\rangle^2_2&=\mathbb{E}\langle \int_0^t\Phi(t,s)Bu(s)\ds, v\rangle^2_2=\mathbb{E}\left[\left(\int_0^t\langle \Phi(t,s)Bu(s), v\rangle_2\ds\right)^2\right]\\&\leq \mathbb{E}\left[\left(\int_0^t\langle u(s), B^\top\Phi(t,s)^\top v\rangle_2\ds\right)^2\right] \leq v^\top\mathbb{E}\int_0^t\Phi(t,s)BB^\top\Phi(t,s)^\top\ds\, v \,\|u\|_{L^2_t}^2.
			\end{split}
		\end{equation*}
		Based on Lemma \ref{Phi}, we obtain that $\mathbb E\left[\Phi(t,s)BB^\top\Phi(t,s)^\top\right]=\mathbb E\left[\Phi(t-s)BB^\top\Phi(t-s)^\top\right]$. Hence, \begin{equation*}
			\begin{split}
				\mathbb{E}\langle x_u(t), v\rangle^2_2&\leq v^\top\mathbb{E}\int_0^t\Phi(t-s)BB^\top\Phi(t-s)^\top\ds\, v \,\|u\|_{L^2_t}^2\leq v^\top P_{u, T}\, v \,\|u\|_{L^2_T}^2
			\end{split}
		\end{equation*}
		by variable substitution and the increasing nature of $P_{u, T}$  and $\|u\|_{L^2_T}^2$ in $T$. This shows the second part of \eqref {subsys_dom_space}. Exploiting Proposition \ref{Phi_inv}, we know that $x= x_{x_0} + x_u$. Therefore, we have \begin{align*}
			\int_0^T\mathbb{E}\langle x(t), v\rangle^2_2\dt &\leq 2\Big(\int_0^T\mathbb{E}\langle x_{x_0}(t), v\rangle^2_2\dt+\int_0^T\mathbb{E}\langle x_{u}(t), v\rangle^2_2\dt\Big)\\
			&\leq 2\Big(\int_0^T\mathbb{E}\langle x_{x_0}(t), v\rangle^2_2\dt+T\sup_{t\in[0,T]}\mathbb{E}\langle x_{u}(t), v\rangle^2_2\Big)
		\end{align*}
		by the linearity of the inner product in the first argument. Applying \eqref{subsys_dom_space} to this inequality yields \eqref{full_state_dom} using that $P_T=P_{x_0, T}+ P_{u, T}$. By the definitions of $Q_T$ and the Euclidean norm, \eqref{mu} immediately follows, so that this proof is concluded.
	\end{proof}
	\begin{remark}
		If the limits $P_{x_0}=\lim_{T\rightarrow \infty} P_{x_0, T}$, $P_u=\lim_{T\rightarrow \infty} P_{u, T}$, $P=\lim_{T\rightarrow \infty} P_T$ and $Q=\lim_{T\rightarrow \infty} Q_T$ exist, the Gramians in Proposition \ref{dominant_subspace_char} can be replaced by their limit as we have $v^\top P_T v \leq v^\top P v$, $v^\top Q_T v \leq v^\top Q v$ etc for all $v\in \mathbb R^n$.
	\end{remark}
	\begin{remark}\label{interprete_prop}
		We can read Proposition \ref{dominant_subspace_char} as follows. If $v$ is an eigenvector of $P_{x_0, T}$ and $P_{u, T}$, respectively, associated to a small eigenvalue, then $x_{x_0}$ and $x_u$ are small in the direction of $v$. Such state directions can therefore by neglected. The same interpretation holds for $x$ using \eqref{full_state_dom} when $v$ is a respective eigenvector of $P_T$. Now, expanding the initial state as
		\[x_0=\sum_{k=1}^n\langle x_0,q_k\rangle_2q_k,\]
		where $ (q_k)_{k=1,\dots,n} $ represents an orthonormal set of eigenvectors of $ Q_T $, and using the solution representation in \eqref{Ex_sol}, we obtain
		\begin{equation}\label{repr_output}
			y(t)=C\Phi(t)x_0+C\int_0^t\Phi(t,s)Bu(s)\ds
			=\sum_{k=1}^n\langle    x_0,q_k\rangle_2C\Phi(t)q_k+C\int_0^t\Phi(t,s)Bu(s)\ds,
		\end{equation}
		with $ t\in[0,T]$. Identity \eqref{mu} therefore tells us that $v=q_k$ contributes very little to $y$ if the corresponding eigenvalue is small. Such $q_k$ can be removed from the dynamics without causing a large error in \eqref{repr_output}.
	\end{remark}
	\subsubsection{Approximation and computation of Gramians}\label{sec_comp_gram}
	
	In theory, Proposition \ref{dominant_subspace_char} together with Remark \ref{interprete_prop} is the key when aiming to identify dominant subspaces of \eqref{goal_a} that lead to ROMs. However, the Gramians that we defined above are hard to compute. In fact, there is no known link of these Gramians to algebraic Lyapunov equations or matrix differential equations when $H>1/2$. For that reason, we suggest an empirical approach in the following in which approximate Gramians based on sampling are calculated.
	In particular, we consider a discretization of the integral representations by a Monte-Carlo method. Let us introduce a equidistant time grid $0=s_0<s_1<\dots< s_{\mathcal N}=T$ and let $\mathcal N_s$ further be the number of Monte-Carlo samples. Given that $\mathcal N$ and $\mathcal N_s$ are sufficiently large, we obtain
	\begin{equation}\label{empirical_sub_gram}
		\begin{split}
			&{P}_{u, T}\approx\bar {P}_{u, T}=\frac{T}{\mathcal N \cdot \mathcal N_s}\sum_{i=1}^{\mathcal N}\sum_{j=1}^{\mathcal N_s}\Phi(s_i,\omega_j)B B^\top \Phi(s_i,\omega_j)^\top,\\
			&{P}_{x_0, T}\approx\bar{P}_{x_0, T}=\frac{T}{\mathcal N \cdot \mathcal N_s}\sum_{i=1}^{\mathcal N}\sum_{j=1}^{\mathcal N_s} \Phi(s_i,\omega_j)X_0 X_0^\top \Phi(s_i,\omega_j)^\top,
		\end{split}
	\end{equation}
	where $\omega_j\in \Omega$. Now, the advantage is that $\Phi(\cdot)B$ and $\Phi(\cdot) X_0$ are easy to sample as they are the solutions of the control independent variable $x_{x_0}$ in \eqref{stan2} with initial states $x_0\mapsto B$ and $x_0\mapsto X_0$, respectively. This is particularly feasible if $B$ and $X_0$ only have a few columns. Based on \eqref{empirical_sub_gram}, we can then define $\bar P_T:=\bar {P}_{x_0, T}+\bar {P}_{u, T}$ approximating $P_T$. Here, the goal is to choose $\mathcal N$ and $\mathcal N_s$ so that the estimates in Proposition  \ref{dominant_subspace_char} still hold (approximately) ensuring the dominant subspace characterization by the empirical Gramians. Notice that if the limits of the Gramians as $T\rightarrow \infty$ shall be considered, then the terminal time needs to be chosen sufficiently large. In fact, it is also not an issue to write down the empirical version of $Q_T$ which is \begin{align*}                                                                                                                                                                                                                                                                                                                                                                                                                                                                                                                                                                                                                                                                                                                                                                                                                                                                                                                                           \bar {Q}_{T}=\frac{T}{\mathcal N \cdot \mathcal N_s}\sum_{i=1}^{\mathcal N}\sum_{j=1}^{\mathcal N_s}\Phi(s_i,\omega_j)^\top C^\top C \Phi(s_i,\omega_j).                                                                                                                                                                                                                                                                                                                                                                                                                                                                                                                                                                                                                                                                                                                                                                                                                                                                                                                                      \end{align*}
	However, this object is computationally much more involved. This is because $C \Phi(\cdot)$ cannot be linked to an equations that can be sampled easily. In fact, we might have to sample from \eqref{fun_sol}, which is of order $n^2$. This leaves the open question of whether $\bar Q_T$ is numerically tractable. Let us briefly discuss that the computation of $P_T$, $Q_T$ or their limits as $T\rightarrow \infty$ is easier when we are in the Stratonovich setting of $H=1/2$. Once more let us point out the relation between Ito and Stratonovich differential equations. So, the fundamental solution of the state equation in \eqref{goal_a} defined in \eqref{fun_sol} is also the fundamental solution of \eqref{stratonovich}, i.e., it satisfies
	$\Phi(t)=I+\int_0^t A_N\Phi(s)ds+\sum_{i=1}^q\int_0^tN_i\Phi(s)\dW^H_i(s)$, where $A_N:= A+\frac{1}{2}\sum_{i=1}^q N_i^2$. Now, defining the linear operators $\mathcal{L}_{A_N}(X) =A_NX + XA_N^\top $ and $\Pi(X) =\sum_{i=1}^qN_iXN_i^\top$, it is a well-known fact (consequence of Ito's product rule in \cite{oksendal2013stochastic}) that $Z(t)= \mathbb E\big[\Phi(t) M \Phi(t)^\top\big]$ solves \begin{align}\label{lyap_ODE}
		\frac{d}{dt} Z(t)=\mathcal L_{A_N}\big[Z(t)\big]+\Pi\big[Z(t)\big], \quad Z(0) =M,\quad t\geq 0,                                                                                                                                                                                                                                    \end{align}
	where $M$ is a matrix of suitable dimension. We refer, e.g., to \cite{redmann2022gramian} for more details. Setting $M=B B^\top+X_0 X_0^\top$ and integrating \eqref{lyap_ODE} yields \begin{align}\label{eq_PT}
		Z(T)-B B^\top- X_0 X_0^\top=\mathcal L_{A_N}\big[P_T\big]+\Pi\big[P_T\big]                                                                                                                                                                                                                                                                                                                                               \end{align}                                                                                                                                                                using that $P_T=\mathbb{E}\Big[ \int_0^T\Phi(s)\Big(B B^\top+X_0X_0^\top\Big)\Phi(s)^\top\ds \Big]$. If system \eqref{goal_a} is mean square asymptotically stable, that is, $\mathbb E \|\Phi(t)\|^2$ decays exponentially to zero, then we even find
	$-B B^\top- X_0 X_0^\top=\mathcal L_{A_N}\big[P\big]+\Pi\big[P\big]$
	for the limit $P$ of $P_T$. There is still a small gap in the theory left in \cite[Proposition 2.2]{redmann2022gramian} on how to compute $Q_T$ in the case of $H=1/2$. Therefore, the following proposition was stated under the additional assumption that $C^\top C$ is contained in the eigenspace of $\mathcal L_{A_N}^*+ \Pi^*$, where $
	\mathcal L_{A_N}^*(X)= A_N^\top X+X A_N, \quad \Pi^*(X)= \sum_{i=1}^q N_i^\top X N_i$. We prove this result in full generality below.
	\begin{proposition}
		Given that we are in the Stratonovich setting of $H=1/2$. Then, the function $Z_*(t)= \mathbb E\big[\Phi(t)^\top C^\top C \Phi(t)\big]$ solves \begin{align}\label{dual_lyap}
			\frac{d}{dt} Z_*(t)=\mathcal L_{A_N}^*\big[Z_*(t)\big]+\Pi^*\big[Z_*(t)\big], \quad Z_*(0) =C^\top C,\quad t\geq 0.                                                                                                                                                                                                                                     \end{align}
	\end{proposition}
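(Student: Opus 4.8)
The plan is to avoid a direct Ito expansion of $\Phi(t)^\top C^\top C\,\Phi(t)$ and instead to deduce the dual equation \eqref{dual_lyap} from the already-established primal equation \eqref{lyap_ODE} by a duality argument on the space of matrices. First I would fix the Frobenius pairing $\langle X,Y\rangle=\trace(X^\top Y)$ on $\mathbb R^{n\times n}$ and record, by a one-line computation, that $\mathcal L_{A_N}^*$ and $\Pi^*$ are precisely the adjoints of $\mathcal L_{A_N}$ and $\Pi$ with respect to this pairing; for instance $\langle N_i X N_i^\top,Y\rangle=\trace(N_i X^\top N_i^\top Y)=\langle X,N_i^\top Y N_i\rangle$, and analogously for the two summands of $\mathcal L_{A_N}$. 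Hence the generator $\mathcal G:=\mathcal L_{A_N}+\Pi$ of \eqref{lyap_ODE} has adjoint $\mathcal G^*=\mathcal L_{A_N}^*+\Pi^*$, which is exactly the generator appearing in \eqref{dual_lyap}.

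Next I would use the primal fact as a black box: for every matrix $M$ the flow $t\mapsto\mathbb E[\Phi(t)M\Phi(t)^\top]$ solves \eqref{lyap_ODE}, so as a linear evolution on $\mathbb R^{n\times n}$ it equals $e^{t\mathcal G}(M)$. The key step is then to identify $Z_*(t)=\mathbb E[\Phi(t)^\top C^\top C\,\Phi(t)]$ with the adjoint flow applied to $C^\top C$. For an arbitrary test matrix $M$, cyclic invariance of the trace together with $\Phi M^\top\Phi^\top=(\Phi M\Phi^\top)^\top$ gives
\[ \langle M, Z_*(t)\rangle=\mathbb E\,\trace\!\big(M^\top\Phi(t)^\top C^\top C\,\Phi(t)\big)=\big\langle \mathbb E[\Phi(t)M\Phi(t)^\top],\,C^\top C\big\rangle=\big\langle e^{t\mathcal G}(M),\,C^\top C\big\rangle, \]
where linearity was used to move the expectation through the pairing. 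By $(e^{t\mathcal G})^*=e^{t\mathcal G^*}$ this equals $\langle M, e^{t\mathcal G^*}(C^\top C)\rangle$. Since $M$ is arbitrary, $Z_*(t)=e^{t\mathcal G^*}(C^\top C)$, which by construction satisfies $\tfrac{d}{dt}Z_*=\mathcal G^*(Z_*)=\mathcal L_{A_N}^*(Z_*)+\Pi^*(Z_*)$ with $Z_*(0)=C^\top C$, as claimed.

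I expect the main obstacle to be exactly the point where the naive approach fails and which forced the eigenspace hypothesis in \cite{redmann2022gramian}. Applying Ito's product rule directly to $\Phi^\top C^\top C\,\Phi$ (legitimate here since $H=1/2$ permits the Ito form of \eqref{fun_sol} with drift $A_N$) and taking expectations yields $\tfrac{d}{dt}Z_*(t)=\mathbb E[\Phi(t)^\top\,\mathcal G^*(C^\top C)\,\Phi(t)]$, with the constant matrix $\mathcal G^*(C^\top C)$ trapped inside the expectation. To reach the stated form one must pull $\mathcal G^*$ outside, i.e.\ establish $\mathbb E[\Phi^\top\mathcal G^*(R)\Phi]=\mathcal G^*(\mathbb E[\Phi^\top R\Phi])$; this is false for a generic linear map, and holds for $\mathcal G^*$ only because $\mathcal G^*$ commutes with the semigroup $e^{t\mathcal G^*}$ it generates. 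The duality route above makes this commutation automatic and hence removes the eigenspace restriction. If one instead prefers the Ito route, the same conclusion follows by noting that $\tfrac{d}{dt}\mathbb E[\Phi^\top R\Phi]=\mathbb E[\Phi^\top\mathcal G^*(R)\Phi]$ holds for \emph{every} $R$, which promotes the family $T_t:=\big(R\mapsto\mathbb E[\Phi(t)^\top R\Phi(t)]\big)$ to the unique solution $e^{t\mathcal G^*}$ of the linear operator ODE $\dot T_t=T_t\circ\mathcal G^*$, $T_0=\mathrm{Id}$, after which $\mathcal G^*$ commutes with $T_t=e^{t\mathcal G^*}$.
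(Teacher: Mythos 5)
Your proposal is correct and is essentially the paper's own argument in coordinate-free form: the paper vectorizes \eqref{lyap_ODE}, uses the validity of the primal flow for all $M$ to conclude $\mathbb E[\Phi(t)\otimes\Phi(t)]=\expn^{\mathcal K t}$, and then passes to the transpose $\mathcal K^\top$, which is exactly your passage from $\expn^{t\mathcal G}$ to $\expn^{t\mathcal G^*}$ expressed via the Kronecker/vectorization isometry instead of the Frobenius pairing. The two proofs differ only in notation, so no substantive comparison is needed.
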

	\begin{proof}
		Let us vectorize the matrix differential equation \eqref{lyap_ODE} leading to $\frac{d}{dt} \vect[Z(t)] = \mathcal K \vect[Z(t)]$, $\vect[Z(0)]=\vect[M]$, where
		\begin{equation*}
			\mathcal{K} = A_N\otimes I+I\otimes A_N+\sum_{i=1}^q N_i\otimes N_i
		\end{equation*}
		with $\otimes$ representing the Kronecker product between two matrices and $\vect[\cdot]$ being the vectorization operator. Therefore, we know that $\expn^{\mathcal K t} \vect[M]=\vect[Z(t)]=\vect\Big[ \mathbb E\big[\Phi(t) M \Phi(t)^\top \big]\Big]=\mathbb E\big[\Phi(t) \otimes \Phi(t) \big]\vect[M]$ again exploiting the relation between the vectorization and the Kronecker product. Since this identity is true for all matrices $M$, we have $\mathbb E\big[\Phi(t) \otimes \Phi(t) \big]=\expn^{\mathcal K t}$. This is now applied to $\vect\big[Z_*(t)\big]= \vect \Big[\mathbb E\big[\Phi(t)^\top C^\top C \Phi(t)\big]\Big]=\mathbb E\big[\Phi(t)^\top \otimes \Phi(t)^\top \big]\vect[C^\top C]= \expn^{\mathcal K^\top t}\vect[C^\top C]$, since $\mathbb E\big[\Phi(t)^\top \otimes \Phi(t)^\top \big]=\left(\mathbb E\big[\Phi(t)\otimes \Phi(t)\big]\right)^\top$. Therefore, it holds that $\frac{d}{dt} \vect[Z_*(t)] = \mathcal K^\top \vect[Z_*(t)]$, $\vect[Z_*(0)]=\vect[C^\top C]$. Devectorizing this equation and exploiting that $\mathcal K^\top$ is the matrix representation of $\mathcal L_{A_N}^*+\Pi^*$ leads to the claim of this proposition.
	\end{proof}
	Integrating \eqref{dual_lyap} and using that $Q_T=\mathbb E\big[\int_0^T\Phi(t)^\top C^\top C \Phi(t)\dt\big]$ leads to
	\begin{align}\label{eq_QT}
		Z_*(T)-C^\top C=\mathcal L_{A_N}^*\big[Q_T\big]+\Pi^*\big[Q_T\big].
	\end{align}
	Once more, mean square asymptotic stability yields the well-known relation $-C^\top C=\mathcal L_{A_N}^*\big[Q\big]+\Pi^*\big[Q\big]$ by taking the limit as $T\rightarrow \infty$ in \eqref{eq_QT}. Although we found algebraic equation \eqref{eq_PT} and \eqref{eq_QT} from which $P_T$ and $Q_T$ could be computed, it is still very challenging to solve these equations. This is mainly due to the unknowns $Z(T)$ and $Z_*(T)$. In fact, \cite{redmann2022gramian} suggests sampling and variance reduction-based strategies to solve \eqref{eq_PT} and \eqref{eq_QT}. We refer to this paper for more details.

	\section{Model reduction of Young/Stratonovich differential equations}\label{sec4}
	In this section, we introduce ROMs that are based on the (empirical) Gramians of Section \ref{sec_Gram} as they (approximately) identify the dominant subspaces of \eqref{goal_a}. In order to accomplish this, we discuss state space transformations first that diagonalize these Gramians. This diagonalization allows to assign unimportant direction in the dynamics to certain state components according to Proposition \ref{dominant_subspace_char}. Subsequently, the issue is split up into two parts. A truncation procedure is briefly explained for the general case of $H\in[1/2, 1)$, in which unimportant state variables are removed. This strategy is associated to (Petrov-)Galerkin schemes sketched in Section \ref{stochstabgen}. Later, we focus on the case of $H=1/2$ and point out an alternative ansatz that is supposed to perform better than the previously discussed projection method.
	Let us notice once more that since a fractional Brownian motion with $H>1/2$ does not have independent increments, no Lyapunov equations associated with the Gramians can be derived. Therefore, we frequently refer to the empirical versions of these Gramians and the corresponding reduced dimension techniques.
	
	\subsection{State space transformation and balancing}
	
	We introduce a new variable $\tilde x(t) = S x(t)$, where $S$ is a regular matrix. This can be interpreted as a coordinate transform that is chosen in order to diagonalize the Gramians of Section \ref{sec_Gram}. This transformation is the basis for the dimension reduction discussed in Sections \ref{mor_sec1} and \ref{mor_sec2}. Now, inserting $\tilde x(t) = S^{-1} x(t)$ into \eqref{goal_a}, we obtain \begin{equation}\label{goal_a_trans}
		\begin{split}
			d\tilde x(t) &=[\tilde A\tilde x(t)+\tilde Bu(t)]\dt+\sum_{i=1}^q \tilde N _i \tilde x(t)\circ\dW_{i}^H(t), \quad \tilde x(0)=\tilde x_0=\tilde X_0z,\\
			y(t)&=\tilde C\tilde x(t),\quad t\in[0,T],
		\end{split}
	\end{equation}
	where $\tilde A=S{A}S^{-1}$, $\tilde B= SB$, $\tilde N_i=S{N_i}S^{-1}$, $\tilde X_0= SX_0$ and $\tilde C= C S^{-1}$. As we can observe from \eqref{goal_a_trans}, the output remains unchanged under the transformation. However, the fundamental solution of the state equation in \eqref{goal_a_trans} is \begin{align}\label{trans_fund}
		\tilde \Phi(t) = S \Phi(t) S^{-1}.                                                                                                                                                                                                                                                                                                                                                                                                                                                                                                                                              \end{align}
	This is obtained by multiplying \eqref{fun_sol} with $S$ from the left and with $S^{-1}$ from the right. Relation \eqref{trans_fund} immediately transfers to the Gramians which are  \begin{align}\label{balanced_gramP}
		\tilde P_T:&= \mathbb E \int_0^T \tilde \Phi(s)(\tilde B\tilde B^\top+\tilde X_0\tilde X_0^\top) \tilde\Phi(s)^\top \ds = S P_T S^\top\\ \label{balanced_gramQ}
		\tilde Q_T:&= \mathbb E \int_0^T \tilde \Phi(s)^\top\tilde C^\top\tilde C \tilde\Phi(s) \ds = S^{-\top} Q_T S^{-1}.
	\end{align}
	Exploiting \eqref{trans_fund} again, the same relations like in \eqref{balanced_gramP} and \eqref{balanced_gramQ} hold true if $P_T$ and $Q_T$ are replaced by their limits $P, Q$ or their empirical versions $\bar P_T, \bar Q_T$. In the next definition, different diagonalizing transformations $S$ are introduced.
	\begin{definition}\label{defn_balancing_pro}
		
		\begin{itemize}
			\item[($i$)] Let the state space transformation $S$ be given by the eigenvalue decomposition $P_T= S^\top \Sigma S$, where $\Sigma$ is the diagonal matrix of eigenvalues of $P_T$. Then, the procedure is called $P_T$-balancing.
			\item[($ii$)] Let $P_T$ and $Q_T$ be positive definite matrices. If $S$ is of the form $  S=\Sigma^{\frac{1}{2}} U^\top L_P^{-1}$
			with the factorization $P_T=L_PL_P^\top$ and the spectral decomposition $L_P^\top Q_T L_P=U\Sigma^2 U^\top$, where $\Sigma^2$ is the diagonal matrix of eigenvalues of $P_TQ_T$. Then, the transformation is called $P_T$/$Q_T$-balancing.
			\item[($iii$)] Replacing $P_T$ and $Q_T$ by their limits (as $T\rightarrow \infty$) in ($i$) and ($ii$), then the schemes are called $P$-balancing or $P$/$Q$-balancing, respectively, where in these cases $\Sigma$ is either the matrix of eigenvalues of $P$ or $\Sigma^2$ contains the eigenvalues of $PQ$.
			\item[($iv$)] Using the empirical versions of $P_T$ and $Q_T$ instead, the methods in ($i$) and ($ii$) are called $\bar P_T$-balancing and $\bar P_T$/$\bar Q_T$-balancing. Here, $\Sigma$ can be viewed as a random diagonal matrix of the respective eigenvalues.
		\end{itemize}
	\end{definition}
	It is not difficult to check that the transformations introduced in Definition \ref{defn_balancing_pro} diagonalize the underlying Gramians. Nevertheless, we formulate the following proposition.
	\begin{proposition}\label{prop_diag_gram}
		\begin{itemize}
			\item Using the matrix $S$ in Definition \ref{defn_balancing_pro} ($i$), we find that the state variable Gramian of system \eqref{goal_a_trans} is $\tilde P_T= \Sigma$.
			\item If instead $S$ is of the form given in Definition \ref{defn_balancing_pro} ($ii$), we have $\tilde P_T=\tilde Q_T= \Sigma$.
			\item The same type of diagonalization is established if the underlying Gramians are either $P, Q$ or $\bar P_T, \bar Q_T$.
		\end{itemize}
	\end{proposition}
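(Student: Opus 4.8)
The plan is to reduce every assertion to the two transformation identities \eqref{balanced_gramP} and \eqref{balanced_gramQ}, namely $\tilde P_T = S P_T S^\top$ and $\tilde Q_T = S^{-\top} Q_T S^{-1}$, and then to substitute the concrete form of $S$ taken from each part of Definition \ref{defn_balancing_pro}. No further property of the fundamental solution is required, since all of that information has already been absorbed into \eqref{balanced_gramP} and \eqref{balanced_gramQ}. For the first bullet I take $S$ as in Definition \ref{defn_balancing_pro}($i$), so that $P_T = S^\top \Sigma S$; because $P_T$ is symmetric and positive semidefinite its eigenvector matrix may be chosen orthogonal, so $S$ is orthogonal, i.e. $S S^\top = I$. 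Substituting into \eqref{balanced_gramP} gives $\tilde P_T = S (S^\top \Sigma S) S^\top = (S S^\top)\Sigma (S S^\top) = \Sigma$, which is the claimed diagonalization.

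For the second bullet I insert $S = \Sigma^{1/2} U^\top L_P^{-1}$ into \eqref{balanced_gramP}. Using $P_T = L_P L_P^\top$, the inner block telescopes via $L_P^{-1}(L_P L_P^\top) L_P^{-\top} = I$, leaving $\tilde P_T = \Sigma^{1/2} U^\top U \Sigma^{1/2} = \Sigma$ by orthogonality of $U$. For $\tilde Q_T$ I first record that inverting $S$ in reverse order yields $S^{-1} = L_P U \Sigma^{-1/2}$ and hence $S^{-\top} = \Sigma^{-1/2} U^\top L_P^\top$; plugging these into \eqref{balanced_gramQ} and invoking the spectral decomposition $L_P^\top Q_T L_P = U \Sigma^2 U^\top$ gives $\tilde Q_T = \Sigma^{-1/2} U^\top (U \Sigma^2 U^\top) U \Sigma^{-1/2} = \Sigma$, again using $U^\top U = I$. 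This establishes $\tilde P_T = \tilde Q_T = \Sigma$.

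For the third bullet I note that the transformation relations \eqref{balanced_gramP} and \eqref{balanced_gramQ} were already observed to hold verbatim when $P_T, Q_T$ are replaced by their limits $P, Q$ or by the empirical versions $\bar P_T, \bar Q_T$, as remarked immediately after \eqref{balanced_gramQ}. Since the two computations above used nothing beyond these identities together with the defining factorizations of $S$, they carry over unchanged. I do not anticipate a genuine obstacle here: the content is purely linear-algebraic bookkeeping, and the only points that demand care are the orthogonality of the diagonalizing matrices (of $S$ in part ($i$) and of $U$ in part ($ii$)) and the correct order of inversion when forming $S^{-1}$ and $S^{-\top}$.
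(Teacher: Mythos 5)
Your proposal is correct and follows exactly the route the paper intends: the paper's own proof consists of the single sentence ``insert the respective $S$ into \eqref{balanced_gramP} and \eqref{balanced_gramQ}'' plus the observation that those relations persist for $P,Q$ and $\bar P_T,\bar Q_T$, and your computations are just the explicit working-out of that substitution (including the correct inverse $S^{-1}=L_PU\Sigma^{-1/2}$ and the orthogonality of $S$ in part ($i$) and of $U$ in part ($ii$)).
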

	\begin{proof}
		The result follows by inserting the respective $S$ into \eqref{balanced_gramP} and \eqref{balanced_gramQ}. Since these relations also hold true for the pairs $P, Q$ and $\bar P_T, \bar Q_T$, the same argument applies in these cases as well.
	\end{proof}
	Having diagonal Gramians $\Sigma$, Proposition \ref{dominant_subspace_char} (choose $v$ to be the $i$th unit vector in $\mathbb R^n$) together with Remark \ref{interprete_prop}   tells us that we can neglect state components in \eqref{goal_a_trans} that correspond to small diagonal entries $\sigma_i$ of $\Sigma$. Those have to be truncated to obtain a reduced system.

	\subsection{Reduced order models based on projection}\label{mor_sec1}
	
	In that spirit, we decompose the diagonal Gramian based on one of the balancing procedures in Definition \ref{defn_balancing_pro}. We  write \begin{align}\label{partition_sig}
		\Sigma= \begin{bmatrix}{\Sigma}_{1}& \\
			&{\Sigma}_{2}\end{bmatrix},
	\end{align}
	where $\Sigma_1\in\mathbb R^{r\times r}$ contains the $r$ large diagonal entries of $\Sigma$ and $\Sigma_2$ the remaining small ones. We further partition the balanced coefficient of \eqref{goal_a_trans} as follows
	\begin{align}\label{partition}
		\tilde A= \smat{A}_{11}&{A}_{12}\\
		{A}_{21}&{A}_{22}\srix,\quad \tilde{B} = \smat{B}_1\\ {B}_2\srix,\quad \tilde N_i= \smat{N}_{i, 11}&{N}_{i, 12}\\
		{N}_{i, 21}&{N}_{i, 22}\srix\quad \tilde{X}_0 = \smat{X}_{0, 1}\\ {X}_{0, 2}\srix,\quad \tilde{C} = \smat{C}_1 & {C}_2\srix.\end{align}
	The balanced state of \eqref{goal_a_trans} is decomposed as $\tilde x=\smat x_1\\ x_2 \srix$, where $x_1$ and $x_2$ are associated to $\Sigma_1$ and $\Sigma_2$, respectively. Now, exploiting the insights of Proposition \ref{dominant_subspace_char}, $x_2$ barely contributes to \eqref{goal_a_trans}. We remove the equation for $x_2$ from the dynamics and set it equal to zero in the remaining parts. This yields a reduced system \begin{equation}\label{goal_a_truncated}
		\begin{split}
			dx_r(t) &=[A_{11}x_r(t)+ B_1 u(t)]\dt+\sum_{i=1}^q N_{i, 11} x_r(t)\circ\dW_{i}^H(t), \quad x_r(0)=x_{0, r}= X_{0, 1}z,\\
			y_r(t)&=C_1x_r(t),\quad t\in[0,T],
		\end{split}
	\end{equation}
	which is of the form like in \eqref{reduced}. If balancing according to Definition \ref{defn_balancing_pro} is used, then $V$ are the first $r$ columns of $S^{-1}$, whereas $W$ represents the first $r$ columns of $S^\top$. Notice that if solely $P_T$, $P$ or $\bar P_T$ are diagonalized (instead of a pair of Gramians), we have $S^{-1}=S^\top$ and hence $W=V$.

	\subsection{An alternative approach for the Stratonovich setting ($H=1/2$)}\label{mor_sec2}
	
	\subsubsection{The alternative}
	As sketched in Section \ref{stochstabgen}, the truncation/projection procedure is not unique for $H=1/2$ meaning that \eqref{alternative_reduced} can be considered instead of \eqref{goal_a_truncated} (being of the form \eqref{reduced}). Such a reduced system is obtained if we rewrite the state of \eqref{goal_a_trans} as a solution to an Ito equation meaning that $\tilde A$ becomes $\tilde A_N = \tilde A + \frac{1}{2}\sum_{i=1}^q \tilde N_i^2$ in the Ito setting. Now, removing $x_2$ from this system like we explained in Section \ref{mor_sec1}, we obtain a reduced Ito system \begin{equation}\label{alternative_reduced2_ito}
		\begin{split}
			dx_r(t) &=[A_{N, 11}x_r(t)+ B_1 u(t)]\dt+\sum_{i=1}^q N_{i, 11} x_r(t)\dW_{i}^H(t), \quad x_r(0)=x_{0, r}= X_{0, 1}z,\\
			y_r(t)&=C_1x_r(t),\quad t\in[0,T],
		\end{split}
	\end{equation}
	where $A_{N, 11}= A_{11}+\frac{1}{2}\sum_{i=1}^q (N_{i, 11}^2+N_{i, 12}N_{i, 21})$ is the left upper $r\times r$ block of $\tilde A_N$. In Stratonovich form, the system is  \begin{equation}\label{alternative_reduced2_strat}
		\begin{split}
			dx_r(t) &=[(A_{11}+\frac{1}{2}\sum_{i=1}^q N_{i, 12}N_{i, 21})x_r(t)+ B_1 u(t)]\dt+\sum_{i=1}^q N_{i, 11} x_r(t)\circ\dW_{i}^H(t), \quad x_r(0)=x_{0, r}= X_{0, 1}z,\\
			y_r(t)&=C_1x_r(t),\quad t\in[0,T],
		\end{split}
	\end{equation}
	which has a state equation of the structure given in \eqref{alternative_reduced}.
	
	\subsubsection{Comparison of \eqref{goal_a_truncated} and \eqref{alternative_reduced2_strat} for $H=1/2$}\label{drawback_rom}
	
	Let us continue setting $H=1/2$. Moreover, we assume $x_0=0$ in this subsection for simplicity. We only focus on $P$- as well as $P$/$Q$-balancing (explained in Definition \ref{defn_balancing_pro} ($iii$)) in order to emphasize our arguments. In addition, we always suppose that $P$ and $Q$ are positive definite. Let us point out that relations between \eqref{goal_a} and \eqref{alternative_reduced2_strat} are well-studied due to the model reduction theory of Ito equations exploiting that these Stratonovich equations are equivalent to \eqref{stratonovich} and \eqref{alternative_reduced2_ito}. In fact, the (uncontrolled) state equation is mean square asymptotically stable ($\mathbb E \|\Phi(t)\|^2\rightarrow 0$ as $t\rightarrow \infty$) if and only if the same is true for \eqref{stratonovich}. This type of stability is well-investigated in Ito settings, see, e.g.,  \cite{damm, staboriginal}. It is again equivalent to the existence of a positive definite matrix $X$, so that the operator $\mathcal L_{A_N}+\Pi$ evaluated at $X$ is a negative definite matrix, i.e., \begin{align}\label{stab_FOM}
		\mathcal L_{A_N}\big[X\big]+\Pi\big[X\big]  <0.                                                                                                                                                                                         \end{align}
	Now, applying $P$/$Q$-balancing to \eqref{goal_a} under the assumptions we made in this subsection, the reduced system \eqref{alternative_reduced2_strat} preserves this property, i.e., there exist a positive definite matrix $X_r$, so that \begin{align}\label{stab_preserv}
		A_{N, 11} X_r+ X_r A_{N, 11}^\top +\sum_{i=1}^q N_{i, 11} X_r  N_{i, 11}^\top  <0.                                                                                                                                                                                         \end{align}
	This result was established in \cite{redbendamm} given that $\sigma_r\neq \sigma_{r+1}$, where $\sigma_i$ is the $i$th diagonal entry of $\Sigma$. If $P$-balancing is used instead, \eqref{stab_preserv} basically holds as well \cite{one_sided_BT}. However, generally a further Galerkin projection of the reduced system (not causing an error) is required in order to ensure stability preservation. We illustrated with the following example that stability is not necessarily preserved in  \eqref{goal_a_truncated} given the Stratonovich case.
	\begin{example}\label{example_no_stab}
		Let us fix $x_0=0$, $q=1$ and consider \eqref{goal_a} with
		\begin{align*}
			A= \begin{bmatrix} {-\frac{13}{8}}&{\frac{5}{4}}\\
				{-\frac{5}{4}}&{-2}\end{bmatrix},\quad {B}=C^\top = \begin{bmatrix}  1\\ 0\end{bmatrix} ,\quad N_1= \begin{bmatrix} \frac{3}{2}& -1\\
				1&1\end{bmatrix}
		\end{align*}
		and hence $A_N=\smat -1& 0\\ 0 & -2\srix$. This system is asymptotically mean square stable, since \eqref{stab_FOM} is satisfied. We apply $P$/$Q$-balancing in order to compute ROMs \eqref{goal_a_truncated} and \eqref{alternative_reduced2_strat} for $r=1$ and $H=1/2$. Now, we find that $2A_{N, 11}+N_{1, 11}^2=-0.85926<0$ which is equivalent to \eqref{stab_preserv} in the scalar case. On the other hand, \eqref{goal_a_truncated} is not stable, because $2(A_{11}+0.5N_{1, 11}^2)+N_{1, 11}^2= 0.13825>0$.
	\end{example}
	Example \ref{example_no_stab} shows us that we cannot generally expect a good approximation of \eqref{goal_a} by \eqref{goal_a_truncated} in the Stratonovich setting as the asymptotic behaviour can be contrary.\smallskip
	
	We emphasize this argument further by looking at the error of the approximations if the full model \eqref{goal_a} and the reduced system \eqref{goal_a_truncated} have the same asymptotic behaviour. First, let us note the following. If \eqref{goal_a} is mean square asymptotically stable, then applying $P$- or $P$/$Q$-balancing to this equation ensures the existence of a matrix $\mathcal W$ (depending on the method), so that \begin{align}\label{rep_bound0}
		\sup_{t\in [0, T]}\mathbb E \left\|y(t) - y_r(t) \right\|_2 \leq   \left(\trace\Big(\Sigma_2\mathcal W\Big)\right)^{\frac{1}{2}}  \left\| u\right\|_{L^2_T},                                                                                                                                                                                                                                                                                                                                                                                                                                                                                                                                                                                                                                                                           \end{align}
	where $y_r$ is the output of \eqref{alternative_reduced2_strat}. This was proved in \cite{redmannbenner, one_sided_BT}. Notice that  $\mathcal W$ is independent of the diagonalized Gramian $\Sigma$ and $\Sigma_2$ contains the truncated eigenvalues only, see \eqref{partition_sig}. It is important to mention that \cite{one_sided_BT} just looked at the $P$-balancing case if $C=I$ but \eqref{rep_bound0} holds for general $C$, too. Let us now look at ROM \eqref{goal_a_truncated} and check for a bound like \eqref{rep_bound0}. First of all, we need to assume stability preservation in \eqref{goal_a_truncated} for the existence of a bound. This preservation is not naturally given according to Example \ref{example_no_stab} in contrast to \eqref{alternative_reduced2_strat}.
	\begin{theorem} \label{sig1_bound}
		Given that we consider the Stratonovich setting of $H=1/2$.
		Let system \eqref{goal_a} with output $y$ and $x_0=0$ be mean square asymptotically stable. Moreover, suppose that \eqref{goal_a_truncated} with output $y_r$ and $x_{0, r} =0$ preserves this stability. In case \eqref{goal_a_truncated} is based on either $P$-balancing or $P$/$Q$-balancing according to Definition \ref{defn_balancing_pro} ($iii$), we have \begin{align}\label{rep_bound}
			\sup_{t\in [0, T]}\mathbb E \left\|y(t) - y_r(t) \right\|_2 \leq
			\left(\trace\Big(\Sigma_1(\hat Q_1^\top-Q_r)\Delta_{N, 11}\Big)+ \trace\Big(\Sigma_2\mathcal W\Big)\right)^{\frac{1}{2}}  \left\| u\right\|_{L^2_T},                                                                                                                                                                                                                                                                                                                                                                                                                                                                                                                                                                                                                                                                           \end{align}
		where $\mathcal W:=C_2^\top C_2+2{A}^\top_{N, 12} \hat Q_2 + \sum_{i=1}^{q} {N}^\top_{i, 12} \Big(2\hat Q \smat {N}_{i, 12}\\
		{N}_{i, 22}\srix - Q_r  {N}_{i, 12}\Big)$.
		The above matrices result from the partition \eqref{partition} of the balanced realization \eqref{goal_a_trans} of \eqref{goal_a} and $\tilde A_N= \smat{A}_{N, 11}&{A}_{N, 12}\\
		{A}_{N, 21}&{A}_{N, 22}\srix$, where $\tilde A_N = \tilde A + \frac{1}{2}\sum_{i=1}^q \tilde N_i^2$. Moreover, we set $\Delta_{N, 11}=\sum_{i=1}^q N_{i, 12}N_{i, 21}$ and assume that
		$\hat Q=\smat {\hat Q_1}& {\hat Q_2}\srix$ and $Q_r$ are the unique solutions to \begin{align}\label{yequation}
			(A_{N, 11}-\frac{1}{2}\Delta_{N, 11})^\top \hat Q+ \hat Q \tilde A_N +\sum_{i=1}^{q}  {N}^\top_{i, 11} \hat Q \tilde N_{i} &= -C_1^\top \tilde C, \\ \label{Qr_eq}
			(A_{N, 11}-\frac{1}{2}\Delta_{N, 11})^\top Q_r+ Q_r (A_{N, 11}-\frac{1}{2}\Delta_{N, 11})+\sum_{i=1}^{q} N_{i, 11}^\top Q_r N_{i, 11}&=-C_1^\top C_1.
		\end{align}
		The bound in \eqref{rep_bound} further involves the matrix $
		\Sigma= \smat{\Sigma}_{1}& \\
		&{\Sigma}_{2}\srix$ of either eigenvalues of $P$ ($P$-balancing) or square roots of eigenvalues of $PQ$ ($P$/$Q$-balancing). In particular, $\Sigma_2$ represents the  truncated eigenvalues of the system.
	\end{theorem}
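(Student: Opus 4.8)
The plan is to reduce the assertion to a mean-square estimate and to realise the error $y-y_r$ as the output of a single augmented Ito system, to which the machinery behind Proposition~\ref{dominant_subspace_char} and the Lyapunov characterisation \eqref{dual_lyap}--\eqref{eq_QT} applies. Since $\mathbb E\|y(t)-y_r(t)\|_2\le(\mathbb E\|y(t)-y_r(t)\|_2^2)^{1/2}$ by Cauchy--Schwarz, it suffices to bound the mean-square error uniformly in $t$. Because $H=1/2$, I rewrite both the balanced full system \eqref{goal_a_trans} and the reduced system \eqref{goal_a_truncated} in Ito form: by \eqref{stratonovich} the balanced full state has Ito drift $\tilde A_N$, whereas \eqref{goal_a_truncated} has Ito drift $A_{N,11}-\tfrac12\Delta_{N,11}$ (its Stratonovich drift $A_{11}$ plus the correction $\tfrac12\sum_i N_{i,11}^2$). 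Stacking $\zeta=\left(\begin{smallmatrix}\tilde x\\ x_r\end{smallmatrix}\right)$ yields a linear Ito system with block-diagonal coefficients $\mathcal A_e=\diag(\tilde A_N,\,A_{N,11}-\tfrac12\Delta_{N,11})$ and $\mathcal N_{i,e}=\diag(\tilde N_i,N_{i,11})$, input matrix $\mathcal B_e=\left(\begin{smallmatrix}\tilde B\\ B_1\end{smallmatrix}\right)$, and error output $y-y_r=\mathcal C_e\zeta$ with $\mathcal C_e=\left(\begin{smallmatrix}\tilde C & -C_1\end{smallmatrix}\right)$.

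As $x_0=0$, the augmented state is $\zeta(t)=\int_0^t\Psi(t,s)\mathcal B_e u(s)\ds$ for the augmented fundamental solution $\Psi$. Repeating the componentwise Cauchy--Schwarz argument of Proposition~\ref{dominant_subspace_char} with $v=\mathcal C_e^\top e_k$ and invoking Lemma~\ref{Phi} for $\Psi$ (which holds verbatim, the augmented system being of the same type) to replace $\Psi(t,s)$ by $\Psi(t-s)$ in distribution, I obtain
\[\sup_{t\in[0,T]}\mathbb E\|y(t)-y_r(t)\|_2^2\le\|u\|_{L^2_T}^2\,\trace\big(\mathcal C_e\mathcal P_e\mathcal C_e^\top\big),\]
where $\mathcal P_e=\mathbb E\int_0^\infty\Psi(s)\mathcal B_e\mathcal B_e^\top\Psi(s)^\top\ds$ is the infinite-horizon controllability Gramian of the augmented system. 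Finiteness of $\mathcal P_e$, and the domination of the finite-horizon Gramian by it, is exactly where mean-square stability of \eqref{goal_a} together with the assumed stability preservation of \eqref{goal_a_truncated} enters. I then pass to the dual observability Gramian $\mathcal Q_e$ through the identity $\trace(\mathcal C_e\mathcal P_e\mathcal C_e^\top)=\trace(\mathcal B_e^\top\mathcal Q_e\mathcal B_e)$, both sides being $\int_0^\infty\mathbb E\|\mathcal C_e\Psi(\tau)\mathcal B_e\|_F^2\,{\diff}\tau$.

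By \eqref{dual_lyap}--\eqref{eq_QT}, $\mathcal Q_e$ solves the augmented observability Lyapunov equation with right-hand side $-\mathcal C_e^\top\mathcal C_e$. Writing $\mathcal Q_e=\left(\begin{smallmatrix}\tilde Q & -\hat Q^\top\\ -\hat Q & Q_r\end{smallmatrix}\right)$ and comparing blocks, the $(1,1)$ block is the full observability Gramian, the $(2,2)$ block is precisely $Q_r$ of \eqref{Qr_eq}, and the transpose of the $(1,2)$ block is $-\hat Q$ of \eqref{yequation}; the shifted drift $A_{N,11}-\tfrac12\Delta_{N,11}$ is exactly what produces the left-hand operators in \eqref{yequation} and \eqref{Qr_eq}, and their unique solvability follows from the two stability properties. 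This reduces the task to evaluating
\[\trace(\mathcal B_e^\top\mathcal Q_e\mathcal B_e)=\trace(\tilde B^\top\tilde Q\tilde B)-2\trace(B_1^\top\hat Q\tilde B)+\trace(B_1^\top Q_r B_1).\]

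The remaining step is to match this scalar with $\trace(\Sigma_1(\hat Q_1^\top-Q_r)\Delta_{N,11})+\trace(\Sigma_2\mathcal W)$. Here I substitute $\tilde B\tilde B^\top=-(\tilde A_N\Sigma+\Sigma\tilde A_N^\top+\sum_i\tilde N_i\Sigma\tilde N_i^\top)$ from the controllability Lyapunov equation for $\tilde P=\Sigma$ (Proposition~\ref{prop_diag_gram}), partition all coefficients according to \eqref{partition}--\eqref{partition_sig}, and collapse the result with cyclic-trace identities together with the three matrix equations above; the block-diagonal structure of $\Sigma$ makes the $\Sigma_2$-weighted terms assemble into $\mathcal W$ exactly as in the corrected-ROM bound \eqref{rep_bound0}. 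The main obstacle, and the only genuinely new feature compared with \eqref{rep_bound0}, is the bookkeeping of the contributions created by the drift mismatch $-\tfrac12\Delta_{N,11}$: for the corrected reduced model \eqref{alternative_reduced2_ito} these cancel, whereas here they survive and must be collected into the single residual $\trace(\Sigma_1(\hat Q_1^\top-Q_r)\Delta_{N,11})$. Isolating this residual cleanly, rather than letting it spread across the many block terms generated by the substitution, is where the computation has to be carried out with care; taking the square root and using $\mathbb E\|\cdot\|_2\le(\mathbb E\|\cdot\|_2^2)^{1/2}$ then yields \eqref{rep_bound}.
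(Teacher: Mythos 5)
Your reduction to the augmented It\^{o} error system is sound and, up to presentation, follows the same route as the paper: the bound $\sup_{t\in[0,T]}\mathbb E\|y(t)-y_r(t)\|_2^2\le\trace(\mathcal C_e\mathcal P_e\mathcal C_e^\top)\|u\|_{L^2_T}^2$ you derive is precisely the starting estimate \eqref{firstEB} that the paper imports from \cite{redmannbenner} (with $\mathcal P_e$ having blocks $\Sigma$, $\hat P$, $P_r$); your identification of the It\^{o} drifts $\tilde A_N$ and $A_{N,11}-\frac{1}{2}\Delta_{N,11}$ is correct; and the blocks of your augmented observability Gramian are indeed the full Gramian, $-\hat Q$ and $Q_r$ with $\hat Q,Q_r$ solving \eqref{yequation} and \eqref{Qr_eq}. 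The only strategic difference is that you dualize all three trace terms, whereas the paper dualizes only the cross term via $\trace(\tilde C\hat PC_1^\top)=\trace(\hat Q\tilde BB_1^\top)$ and keeps the other two in controllability form; the two bookkeepings are equivalent.

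The genuine gap is that the final step --- showing $\trace(\mathcal B_e^\top\mathcal Q_e\mathcal B_e)=\trace\big(\Sigma_1(\hat Q_1^\top-Q_r)\Delta_{N,11}\big)+\trace(\Sigma_2\mathcal W)$ --- is asserted as ``bookkeeping to be carried out with care'' but never executed, and this identity is the entire content of the theorem; everything preceding it is standard. The computation is not a one-line collapse. One must (i) evaluate the first $r$ columns of the primal Lyapunov equation \eqref{full_gram2} to express $-\tilde BB_1^\top$ in partitioned form and combine it with the first $r$ columns of \eqref{yequation}, which yields $-\trace(\tilde C\hat PC_1^\top)=-\trace(C_1\Sigma_1C_1^\top)+\frac{1}{2}\trace\big(\Sigma_1\Delta_{N,11}^\top\hat Q_1\big)$ plus $\Sigma_2$-weighted terms; and (ii) handle $\trace\big(C_1(P_r-\Sigma_1)C_1^\top\big)$ by subtracting \eqref{red_Gram} from the left upper $r\times r$ block of \eqref{full_gram2}, which leaves the residual source $\sum_{i=1}^qN_{i,12}\Sigma_2N_{i,12}^\top+\frac{1}{2}\Delta_{N,11}\Sigma_1+\frac{1}{2}\Sigma_1\Delta_{N,11}^\top$, and then pairing this with $Q_r$ through \eqref{Qr_eq}. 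Only after both steps do the $\Sigma_1$-weighted contributions assemble into the single residual $\trace\big(\Sigma_1(\hat Q_1^\top-Q_r)\Delta_{N,11}\big)$ and the $\Sigma_2$-weighted ones into $\trace(\Sigma_2\mathcal W)$. As written, your argument establishes only the intermediate augmented-Gramian bound, not the specific formula \eqref{rep_bound} whose dependence on the non-truncated block $\Sigma_1$ is the point of the theorem.
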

	\begin{proof}
		We have to compare the outputs of \eqref{goal_a_trans} and \eqref{goal_a_truncated}. This is the same like calculating the error between the corresponding Ito versions of these systems. In the Ito equation of  \eqref{goal_a_trans},  $\tilde A$ is replaced by $\tilde A_{N}$ and the Ito form of \eqref{goal_a_truncated} involves $A_{11}+\frac{1}{2}\sum_{i=1}^q N_{i, 11}^2=A_{N, 11}-\frac{1}{2}\Delta_{N, 11}$ instead of $A_{11}$. Since either $P$-balancing or $P$/$Q$-balancing is used, we know that at least one of the Gramians is diagonal, i.e., $P=\Sigma$ (see Proposition \ref{prop_diag_gram}). Since we are in the case of $H=1/2$, we also know the relation to Lyapunov equations by Section \ref{sec_comp_gram}, so that we obtain \begin{align}\label{full_gram2}
			\tilde A_N \Sigma + \Sigma \tilde A_N^\top+ \sum_{i=1}^{q} \tilde N_{i} \Sigma \tilde N_{i}^\top &= -\tilde B \tilde B^\top.\end{align}
		In the Ito setting, an error bound has been established in \cite{redmannbenner}. Applying this result yields
		\begin{align}\label{firstEB}
			\sup_{t\in [0, T]}\mathbb E \left\|y(t) - y_r(t) \right\|_2 \leq  \left(\trace(\tilde C \Sigma \tilde C^\top) +   \trace(C_1 P_r C_1^\top) - 2 \trace(\tilde C \hat P C_1^\top)\right)^{\frac{1}{2}}  \left\| u\right\|_{L^2_T}.
		\end{align}
		The reduced system Gramian $P_r$ as well as the mixed Gramian $\hat P$ exist due to the assumption that stability is preserved in the reduced system. They can be defined as the unique solutions of \begin{align} \label{red_Gram}
			(A_{N, 11}-\frac{1}{2}\Delta_{N, 11})P_r+ P_r (A_{N, 11}-\frac{1}{2}\Delta_{N, 11})^\top+\sum_{i=1}^{q} N_{i, 11} P_r N_{i, 11}^\top &=-B_1B_1^\top,
			\\  \label{mixed_gram2}
			\tilde A_N \hat P + \hat P (A_{N, 11}-\frac{1}{2}\Delta_{N, 11})^\top+\sum_{i=1}^{q} \tilde N_{i} \hat P {N}_{i, 11}^\top &= -\tilde B B_1^\top.
		\end{align}
		Using the partitions of $\tilde A_N$ and the other matrices in \eqref{partition}, we evaluate the first $r$ columns of \eqref{full_gram2} to obtain \begin{align}\label{rel_bla}
			- \tilde B B_1^\top &=    \tilde A_N \smat{\Sigma}_{1}\\
			0 \srix + \Sigma \smat{A}^\top_{N, 11}\\{A}^\top_{N, 12}\srix+ \sum_{i=1}^{q} \tilde N_{i} \Sigma \smat{N}^\top_{i, 11}\\{N}^\top_{i, 12}\srix \\    \nonumber
			&=  \smat{A}_{N, 11}\\ {A}_{N, 21}\srix \Sigma_{1} + \smat \Sigma_1 {A}^\top_{N, 11}\\ \Sigma_2 {A}^\top_{N, 12}\srix+ \sum_{i=1}^{q} \left(\smat{N}_{i, 11}\\
			{N}_{i, 21}\srix  \Sigma_1 {N}^\top_{i, 11} +\smat {N}_{i, 12}\\
			{N}_{i, 22}\srix \Sigma_2 {N}^\top_{i, 12}\right).                                                                                                                         \end{align}
		Using the properties of the trace, we find the relation  $\trace(\tilde C \hat P C_1^\top) = \trace(\hat Q \tilde B B_1^\top)$ between the mixed Gramians satisfying \eqref{yequation} and \eqref{mixed_gram2}. We insert \eqref{rel_bla} into this relation giving us \begin{align*}
			-\trace(\tilde C \hat P C_1^\top)  &=  \trace\left(\hat Q\left[ \smat{A}_{N, 11}\\ {A}_{N, 21}\srix \Sigma_{1} + \smat \Sigma_1 {A}^\top_{N, 11}\\ \Sigma_2 {A}^\top_{N, 12}\srix+ \sum_{i=1}^{q} \left(\smat{N}_{i, 11}\\
			{N}_{i, 21}\srix  \Sigma_1 {N}^\top_{i, 11} +\smat {N}_{i, 12}\\
			{N}_{i, 22}\srix \Sigma_2 {N}^\top_{i, 12}\right)\right]\right)  \\
			&=   \trace\left(\Sigma_1\left[\hat Q \smat{A}_{N, 11}\\ {A}_{N, 21}\srix+({A}_{N, 11}-\frac{1}{2}\Delta_{N, 11})^\top \hat Q_1 + \sum_{i=1}^{q}  {N}^\top_{i, 11} \hat Q \smat{N}_{i, 11}\\
			{N}_{i, 21}\srix \right]\right) \\
			&\quad +   \frac{1}{2} \trace\left(\Sigma_1\Delta_{N, 11}^\top \hat Q_1 \right)+\trace\left(\Sigma_2\left[{A}^\top_{N, 12} \hat Q_2 + \sum_{i=1}^{q} {N}^\top_{i, 12} \hat Q \smat {N}_{i, 12}\\
			{N}_{i, 22}\srix \right]\right).
		\end{align*}
		The first $r$ columns of \eqref{yequation} give us $\hat Q \smat{A}_{N, 11}\\ {A}_{N, 21}\srix+({A}_{N, 11}-\frac{1}{2}\Delta_{N, 11})^\top \hat Q_1 + \sum_{i=1}^{q}  {N}^\top_{i, 11} \hat Q \smat{N}_{i, 11}\\
		{N}_{i, 21}\srix= -C_1^\top C_1$ and hence
		\begin{align*}
			-\trace(\tilde C \hat P C_1^\top) = -\trace(C_1\Sigma_1C_1^\top)+  \frac{1}{2} \trace\left(\Sigma_1\Delta_{N, 11}^\top \hat Q_1 \right)+\trace\left(\Sigma_2\left[{A}^\top_{N, 12} \hat Q_2 + \sum_{i=1}^{q} {N}^\top_{i, 12} \hat Q \smat {N}_{i, 12}\\
			{N}_{i, 22}\srix \right]\right).
		\end{align*}
		We exploit this for the bound in \eqref{firstEB} and further find that $\trace(\tilde C \Sigma \tilde C^\top) =\trace(C_1 \Sigma_1 C_1^\top) +\trace(C_2 \Sigma_2 C_2^\top) $. Thus, we have
		\begin{align}\label{second_last_est}
			&\trace(\tilde C \Sigma \tilde C^\top) +   \trace(C_1 P_r C_1^\top) - 2 \trace(\tilde C \hat P C_1^\top)\\ \nonumber
			&= \trace(C_1(P_r-\Sigma_1)C_1^\top)+\trace\left(\Sigma_1\Delta_{N, 11}^\top \hat Q_1 \right)+\trace\left(\Sigma_2\left[C_2^\top C_2+2{A}^\top_{N, 12} \hat Q_2 + 2\sum_{i=1}^{q} {N}^\top_{i, 12} \hat Q \smat {N}_{i, 12}\\
			{N}_{i, 22}\srix \right]\right).
		\end{align}
		Now, we analyze $P_r-\Sigma_1$. The left upper $r\times r$ block of \eqref{full_gram2}  fulfills \begin{align*}
			& (A_{N, 11}-\frac{1}{2}\Delta_{N, 11})\Sigma_1+ \Sigma_1 (A_{N, 11}-\frac{1}{2}\Delta_{N, 11})^\top+\sum_{i=1}^{q} N_{i, 11} \Sigma_1 N_{i, 11}^\top \\
			&=-B_1B_1^\top-\sum_{i=1}^q N_{i, 12}\Sigma_2 N_{i, 12}^\top-\frac{1}{2}\Delta_{N, 11}\Sigma_1- \Sigma_1
			\frac{1}{2}\Delta_{N, 11}^\top.                                                                                                                                                                                               \end{align*}
		Comparing this with \eqref{red_Gram} yields \begin{align*}
			&(A_{N, 11}-\frac{1}{2}\Delta_{N, 11})(P_r-\Sigma_1)+ (P_r-\Sigma_1) (A_{N, 11}-\frac{1}{2}\Delta_{N, 11})^\top+\sum_{i=1}^{q} N_{i, 11} (P_r-\Sigma_1) N_{i, 11}^\top \\
			&=\sum_{i=1}^q N_{i, 12}\Sigma_2 N_{i, 12}^\top+\frac{1}{2}\Delta_{N, 11}\Sigma_1 + \Sigma_1
			\frac{1}{2}\Delta_{N, 11}^\top.
		\end{align*}
		Therefore, using \eqref{Qr_eq}, we obtain that
		\begin{align*}
			&\trace(C_1(P_r-\Sigma_1)C_1^\top)=\trace((P_r-\Sigma_1)C_1^\top C_1)\\
			&=-\trace\Big((P_r-\Sigma_1)[(A_{N, 11}-\frac{1}{2}\Delta_{N, 11})^\top Q_r+ Q_r (A_{N, 11}-\frac{1}{2}\Delta_{N, 11})+\sum_{i=1}^{q} N_{i, 11}^\top Q_r N_{i, 11}]\Big)\\
			&=-\trace\Big([(A_{N, 11}-\frac{1}{2}\Delta_{N, 11})(P_r-\Sigma_1)+(P_r-\Sigma_1)(A_{N, 11}-\frac{1}{2}\Delta_{N, 11})^\top+\sum_{i=1}^{q} N_{i, 11}(P_r-\Sigma_1) N_{i, 11}^\top] Q_r \Big)\\
			&=-\trace\Big([\sum_{i=1}^q N_{i, 12}\Sigma_2 N_{i, 12}^\top+\Delta_{N, 11}\Sigma_1] Q_r \Big)=-\trace\Big([\Sigma_2\sum_{i=1}^q  N_{i, 12}^\top Q_r N_{i, 12}+\Sigma_1 Q_r \Delta_{N, 11}]\Big).
		\end{align*}
		Inserting this into \eqref{second_last_est} concludes the proof.
	\end{proof}
	Even if stability is preserved in \eqref{goal_a_truncated}, we cannot ensure a small error if we only know that $\Sigma_2$ is small. This is the main conclusion from Theorem \ref{sig1_bound} as the bound depends on a potentially very large matrix $\Sigma_1$. This is an indicator that there are cases in which \eqref{goal_a_truncated} might perform poorly. The correction term $\frac{1}{2}\Delta_{N, 11}=\frac{1}{2}\sum_{i=1}^q N_{i, 12}N_{i, 21}$ in \eqref{alternative_reduced2_strat} ensures that the expression in \eqref{rep_bound} that depends on  $\Delta_{N, 11}$ is canceled out. This leads to the bound in \eqref{rep_bound0}. Let us conclude this paper by conducting several numerical experiments.
	
	\section{Numerical results}\label{sec5}

	In this section, the reduced order techniques that are based on balancing and lead to a system like in \eqref{goal_a_truncated} or \eqref{alternative_reduced2_strat} are applied to two examples. In detail, stochastic heat and wave equations driven by fractional Brownian motions with different Hurst parameters $H$ are considered and formally discretized in space. This discretization yields a system of the form \eqref{goal_a} which we reduce concerning the state space dimension. Before we provide details on the model reduction procedure, let us briefly describe the time-discretization that is required here as well. We use an implicit scheme, because spatial discretizations of the underlying stochastic partial differential equations are stiff.
	
	\subsection{Time integration}\label{Tim}
	The stochastic differential equations \eqref{goal_a}, \eqref{goal_a_truncated} and \eqref{alternative_reduced2_strat}  can be numerically solved by employing a variety of general-purpose stochastic numerical schemes (see, e.g., \cite{Hu_Liu_Nualart, kloeden_platen, Neuenkirch} and the references therein). Encountered frequently in practice, stiff differential equations present a difficult challenge for numerical simulation in both deterministic and stochastic systems. Implicit methods are generally found to be more effective than explicit methods for solving stiff problems. The goal of this work is to exploit an implicit numerical method that is well-suited for addressing stiff stochastic differential equations. The stochastic implicit midpoint method will be the subject of our attention throughout the entire numerical section. We refer to \cite{Hong} ($H>1/2$) and \cite{redmann2020runge} ($H= 1/2$) for more detailed consideration on Runge-Kutta methods based on increments of the driver. In particular, the stochastic implicit midpoint method is a Runge-Kutta scheme with Butcher tableau \begin{align*}
		\begin{array}{l|l}
			\frac{1}{2}     &  \frac{1}{2} \\
			\hline
			&  1  \end{array}.
	\end{align*}
	It therefore takes the form
	\begin{equation}\label{mid_point}
		\begin{split}
			x_{k+1}=x_k+\Big[A\Big(\frac{x_k+x_{k+1}}{2}\Big)+Bu\Big(t_k+\frac{\Delta t}{2}\Big)\Big]\Delta t+\sum_{i=1}^q N_i\Big(\frac{x_k+x_{k+1}}{2}\Big)\Delta W^H_{i, k}
		\end{split}
	\end{equation}
	when applying it to \eqref{goal_a}, where $\Delta t$ denotes the time step related to equidistant grid points $t_k$. Moreover, we define $ \Delta W^H_{i, k} =  W^H_i(t_k+1)-W^H_i(t_k)$. The midpoint method converges with almost sure/$L^p$-rate (arbitrary close to) $2H-1/2$ for $H\in[1/2, 1)$.

	\subsection{Dimension reduction for a stochastic heat equation}

	We begin with a modified version of an example studied in \cite{redmannbenner}. In particular, not an Ito equation driven by a Brownian motion is studied. Instead, we
	consider the following Young/Stratonovich stochastic partial differential equation driven by a (scalar) fractional Brownian motion $W^H$ with Hurst parameter $ H\in[1/2,1)$:
	\begin{equation}\label{ex1}
		\begin{split}
			\frac{\partial X(t,\zeta)}{\partial t}&=a \Delta X(t,\zeta)+1_{[\frac{\pi}{4},\frac{3\pi}{4}]^2}(\zeta)u(t)+\gamma e^{-|\zeta_1-\frac{\pi}{2}|-\zeta_2}X(t,\zeta)\circ \frac{\partial W^H(t)}{\partial t},\quad t\in[0,1],\quad \zeta\in[0,\pi]^2,\\
			X(t,\zeta)&=0,\quad t\in[0,1],\quad \zeta\in\partial[0,\pi]^2,\quad \text{and} \quad	X(0,\zeta)=b \cos(\zeta),
		\end{split}
	\end{equation}
	where $ a ,b>0 $, $ \gamma\in\mathbb{R} $ and  a single input meaning that $m=1$. The solution space for a mild solution is supposed to be $ \mathcal H=L^2([0,\pi]^2) $ exploiting that the Dirichlet Laplacian generates a $C_0$-semigroup. The following average temperature is assumed to be the quantity of interest:
	\[Y(t)=\frac{4}{3\pi^2}\int_{[0,\pi]^2\setminus[\frac{\pi}{4},\frac{3\pi}{4}]^2}X(t,\zeta)d\zeta.\]
	Based on the eigenfunctions of the Laplacian, a spectral Galerkin scheme analogous to the method, proposed and explained in \cite{redmannbenner}, is applied to \eqref{ex1}. Roughly speaking, such a discretization is based on an orthogonal projection onto the subspace spanned by the dominant eigenvector of the Laplacian. This results in system \eqref{goal_a} of order $n$ with scalar control and a fixed initial state $x_0$. The detailed structure of the matrices $A, B, N_1$ and $C$ can be found in \cite{redmannbenner}. In the following, we fix $ a=0.2 $, $ b=1$ and set $n = 1024$.
	We investigate two cases. These are $H=0.5$ and $H=0.75$. In the following, we explain the particular dimension reduction techniques for each scenario. \\
	\textbf{Case $\mathbf{H=0.75}:$}\quad
	We have pointed out in Section \ref{sec_comp_gram} that Gramians $P_T$ and $Q_T$ (or their limits $P$ and $Q$) are hard to compute for $H>1/2$, since a link of these matrices to ordinary differential or algebraic equations is unknown. Therefore, we solely consider empirical Gramians discussed in Section \ref{sec_comp_gram} for $H=0.75$. In fact, $\bar P_T$ is available by sampling the solution of \eqref{stan2}, whereas $\bar Q_T$ seems computational much more involved. For that reason, we apply $\bar P_T$-balancing (see Definition \ref{defn_balancing_pro} ($iv$) to system \eqref{goal_a} that obtained from the above heat equation. This results in \eqref{goal_a_trans} which is truncated in order to find the reduced equation \eqref{goal_a_truncated}. Two other related approaches are conducted in this section as well.
	\begin{itemize}
		\item
		We apply the same $\bar P_T$-balancing procedure to subsystems \eqref{stan1} and \eqref{stan2}, i.e., $\bar {P}_{u, T}$-balancing is used for \eqref{stan1} and $\bar{P}_{x_0, T}$-balancing for \eqref{stan2}, compare with \eqref{empirical_sub_gram}. The sum of the resulting reduced order systems is then used to approximate \eqref{goal_a}. For refer to this second ansatz as splitting-based $\bar P_T$-balancing.
		\item Another empirical dimension reduction technique called proper orthogonal decomposition (POD) is available for this setting \cite{Jamshidi}. For this method, the solution space of \eqref{goal_a} is learned using samples. In that context, a snapshot matrix with columns of the form $x(t_i,\omega_j)$ is computed with $i=1, \dots, \mathcal N$ and $j=1, \dots, \mathcal N_s$, where $t_i\in [0, T]$ and $\omega_j\in\Omega$. These samples are potentially based on various initial states  $x_0$ and controls $u$. Notice that snapshot matrices are  computed based on a small set of $x_0$ and $u$ aiming to provide ROMs performing well for a large number of $x_0$ and $u$. We end up with a POD-based reduced system \eqref{reduced}, where the projection matrix $V=W$ consists of vectors associated to large singular values of the snapshot matrix. Instead of using POD for \eqref{goal_a} directly, we apply it to subsystems \eqref{stan1} and \eqref{stan2} and find an approximation for \eqref{goal_a} by the sum of the reduced subsystems. We call this splitting-based POD.
	\end{itemize}
	\textbf{Case $\mathbf{H=0.5}$:}\quad
	Similar techniques are exploited for the Stratonovich setting. However, we have the advantage that $P_T$ and $Q_T$ can be computed from matrix equations, see \eqref{eq_PT} and \eqref{eq_QT}. Still these equations are difficult to solve. Therefore, we use the sampling and variance reduction based schemes proposed in \cite{redmann2022gramian} in order to solve them. Due to the availability of both Gramians, we apply $P_T$/$Q_T$-balancing, see Definition \ref{defn_balancing_pro} ($ii$), instead of the procedure based on diagonalizing $\bar P_T$. However, we truncate differently, i.e., the reduced system \eqref{alternative_reduced2_strat} is used instead due to the drawbacks of \eqref{goal_a_truncated} pointed out in Section \ref{drawback_rom} when $H=0.5$.
	The splitting-based $P_T$/$Q_T$-balancing is defined the same way. It is the technique, where ${P}_{u, T}$/$Q_T$-balancing is conducted for \eqref{stan1} and ${P}_{x_0, T}$/$Q_T$-balancing  is exploited for \eqref{stan2} to obtain reduced systems of the form \eqref{alternative_reduced2_strat} for each subsystem. Again, we use a splitting-based POD scheme according to \cite{Jamshidi} for $H=0.5$.\smallskip
	
	For the discretization in time, the stochastic midpoint method \eqref{mid_point}, stated in Section \ref{Tim}, is employed here, where the number of time steps is $\mathcal N = 100$. Moreover, all empirical objects are calculated based on $\mathcal N_s= 10^3$ samples. The error between the reduced systems and the original model is computed for the control $u(t)=\sqrt{\frac{2}{\pi}}\sin(t) $, where the reduction error is measured by the quantity $ \mathcal{R}_{ E}=\dfrac{\sup_{t\in[0,1]}\mathbb{E}\|y(t)-y_r(t)\|_2}{\sup_{t\in[0,1]}\mathbb{E}\|y(t)\|_2} $.
	
	In the case of $H=0.5$, Figure \ref{fig:1} illustrates that splitting-based $P_T$/$Q_T$-balancing (2. Gramian) and $P_T$/$Q_T$-balancing (1. Gramian) generate very similar results. Both techniques produce notably better outcomes compared to the splitting-based POD method. The worst case errors of the plot are also state in the associated Table \ref{tab:1}.

	\begin{figure}
		\begin{minipage}{75mm}
			\includegraphics[width=\linewidth,height=50mm]{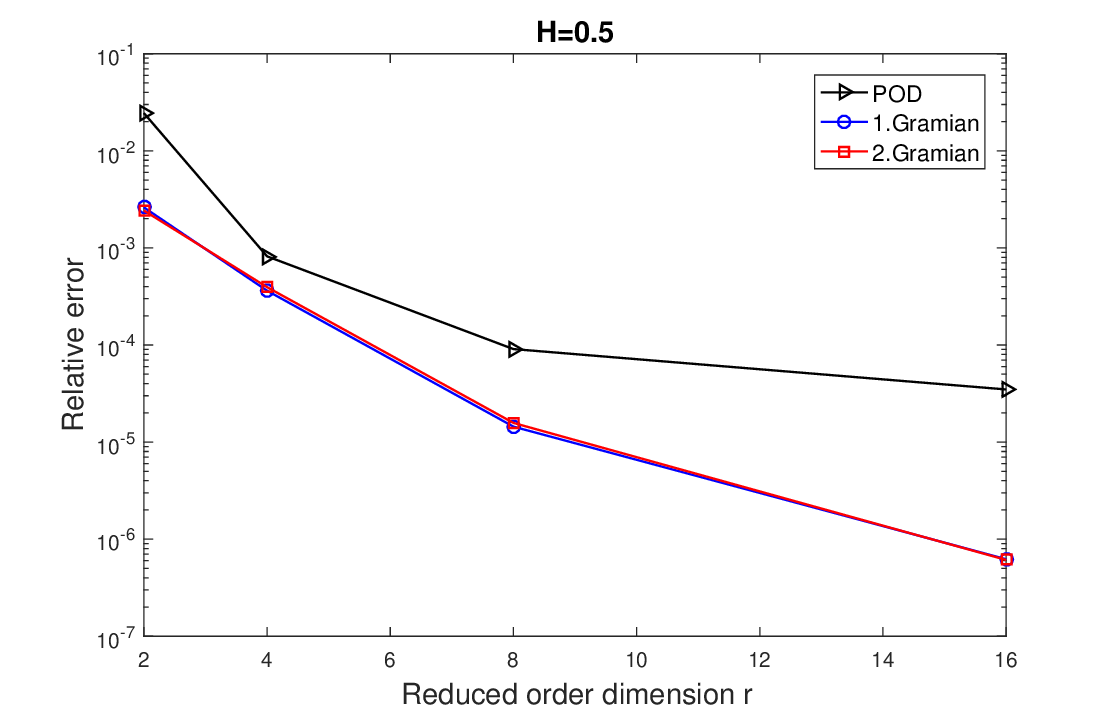}
			\caption{$\mathcal{R}_E $ for three approaches with Hurst parameters $ H=0.5$. }
			\label{fig:1}
		\end{minipage}
		\hfil
		\begin{minipage}{65mm}
			\captionsetup{type=table}
			\caption{$\mathcal{R}_E $ for $ r\in\{2,4,8,16\} $ and $ H=0.5 $.}
			\label{tab:1}
			\begin{tabular}{@{}llll@{}}
				\hline
				$ r $  & POD &  1. Gramian & 2. Gramian  \\
				\hline
				
				2           & $ 2.4471e-02  $ & $ 2.6131e-03  $ & $ 2.4251e-03   $ \\
				
				4         & $ 8.1898e-04  $ & $ 3.6254e-04  $ & $ 3.9410e-04 $ \\
				
				8           & $ 9.0777e-05  $ & $ 1.4427e-05  $ & $1.5756e-05  $ \\
				
				16         & $ 3.4842e-05 $ & $ 6.2128e-07  $&$ 6.1161e-07 $  \\
				
				\hline
			\end{tabular}
		\end{minipage}
	\end{figure}
	
	On the other hand, the Young setting in which we have $H=0.75$ presents a different scenario. Figure \ref{fig:2} demonstrates that splitting-based POD exhibits a better performance compared to splitting-based $\bar{P}_T$-balancing (2. Gramian) and the usual $\bar{P}_T$-balancing (1. Gramian), except when the reduced dimension is $ 16 $. Surprisingly, for $ r=16 $, the 2. Gramian method yields better results compared to POD. It is worth noting that both empirical Gramian methods provide similar outcomes, which is an indicator for a nearly identical  reduction potential for both subsystems \eqref{stan1} and \eqref{stan2}. Note that the error of the plot can be found in Table \ref{tab:2}.

	\begin{figure}
		\begin{minipage}{75mm}
			\includegraphics[width=\linewidth,height=50mm]{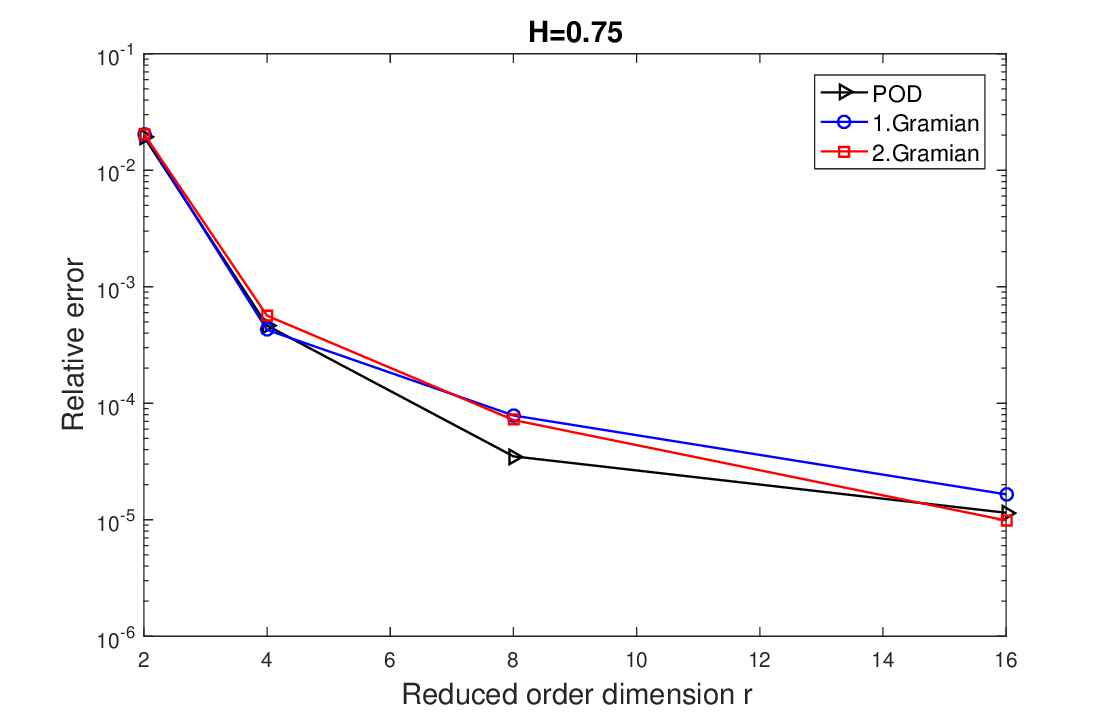}
			\caption{$\mathcal{R}_E $ for three approaches with Hurst parameters $ H=0.75 $. }
			\label{fig:2}
		\end{minipage}
		\hfil
		\begin{minipage}{65mm}
			\captionsetup{type=table}
			\caption{$\mathcal{R}_E $ for $ r\in\{2,4,8,16\} $ and $ H=0.75 $.}
			\label{tab:2}
			\begin{tabular}{@{}llll@{}}
				
				\hline
				$ r $  & POD &  1. Gramian & 2. Gramian  \\
				\hline
				
				2           & $ 1.9428e-02     $  & $ 2.0531e-02   $&$ 2.0543e-02 $ \\
				
				4         & $ 4.6419e-04  $  & $ 4.2626e-04 $&$ 5.6448e-04 $  \\
				
				8           & $ 3.5032e-05  $  & $ 7.8586e-05$ &$ 7.1846e-05 $  \\
				
				16         & $ 1.1479e-05 $  &$ 1.6520e-05   $&$ 9.8581e-06 $  \\
				
				\hline
			\end{tabular}
		\end{minipage}
	\end{figure}
	
	For both, $H=0.5$ and $H=0.75$ an enormous reduction potential can be observed, meaning that small dimensions $r$ lead to accurate approximations. According to Remark \ref{interprete_prop} this is known a-priori by the strong decay of certain eigenvalues associated to the system Gramians, since small eigenvalues indicate variables of low relevance. Given $H=0.75$, Figure \ref{fig:3_1} shows the eigenvalues of $\bar P_T$ (1. Gramian), the sum eigenvalues of $\bar P_{u, T}$ and $\bar P_{x_0, T}$ (2. Gramian) as well as the sum of the singular values corresponding to the POD snapshot matrices of subsystems \eqref{stan1} and \eqref{stan2}. Similar types of algebraic values are considered for $H=0.5$ in Figure \ref{fig:3_2}. Here, square roots of eigenvalues of $P_T Q_T$ (1. Gramian)  or the sum of square roots of eigenvalues of $P_{u, T} Q_T$ and $P_{x_0, T} Q_T$ (2. Gramian) are depicted.
	\begin{figure}
		\begin{minipage}{75mm}
			\includegraphics[width=68mm,height=50mm]{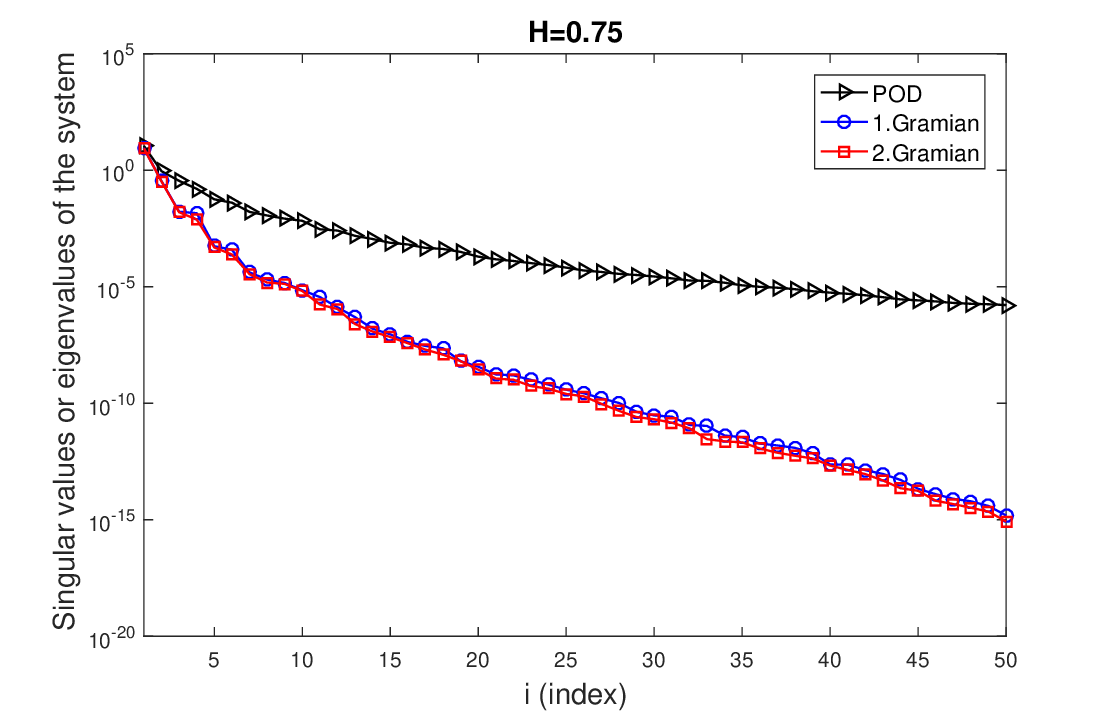}
			\caption{First $50$ POD singular values or eigenvalues associated to $\bar P_T$ for $H=0.75$.}\label{fig:3_1}
		\end{minipage}
		\begin{minipage}{75mm}
			\includegraphics[width=68mm,height=50mm]{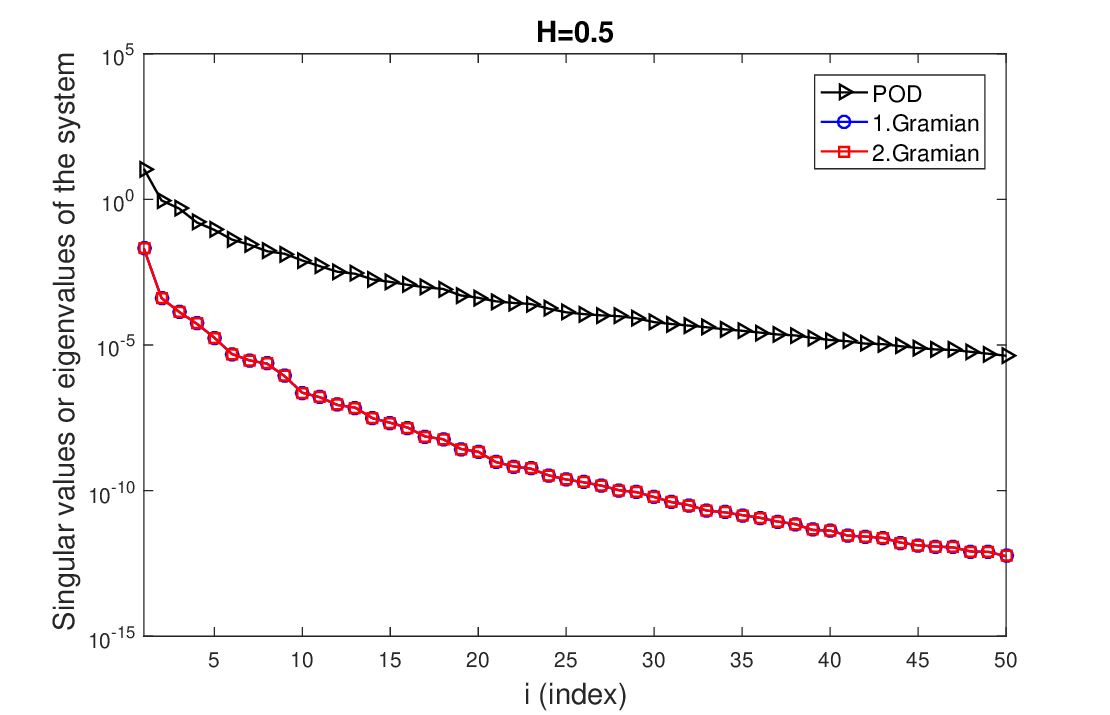}
			\caption{First $50$ POD singular values or eigenvalues associated to $P_T$/$Q_T$ for $H=0.5$.}
			\label{fig:3_2}
		\end{minipage}
	\end{figure}
	The large number of small eigenvalues (or singular values) explains why small errors could be achieved in our simulations.

	\subsection{Dimension reduction for a stochastic wave equation}
	
	We consider the following controlled stochastic partial differential equation which is a modification of the example studied in \cite{redmannbenner2}. In detail, we consider fractional drivers $W^H$ with $H\in [0.5, 1)$ in a Young/Stratonovich setting instead of Ito differential equations driven by a Brownian motion. For $ t\in[0,1]$ and $\zeta\in[0,\pi] $
	\begin{equation}\label{ex2}
		\begin{split}
			&\frac{\partial^2 X(t,\zeta)}{\partial t^2}+a\frac{\partial X(t,\zeta)}{\partial t} =\frac{\partial^2}{\partial \zeta^2} X(t,\zeta)+ e^{-|\zeta-\frac{\pi}{2}|}u(t)+2 e^{-|\zeta-\frac{\pi}{2}|}X(t,\zeta)\circ \frac{\partial W^H(t)}{\partial t},\\
			&X(t,0)=0=X(t,\pi),\quad t\in[0,1],\quad X(0,\zeta)\equiv 0, \quad \frac{\partial}{\partial t} X(t, \zeta)\big\vert_{t=0}=b \cos(\zeta)
		\end{split}
	\end{equation}
	is investigated and the output equation is
	\[Y(t)=\frac{1}{2\epsilon}\left( \int_{\frac{\pi}{2}-\epsilon}^{\frac{\pi}{2}+\epsilon}X(t,\zeta)d\zeta\quad \int_{\frac{\pi}{2}-\epsilon}^{\frac{\pi}{2}+\epsilon}\frac{\partial }{\partial t}X(t,\zeta)d\zeta\right)^\top,\]
	so that both the position and velocity of the middle of the string are observed. Moreover, $ a,b>0 $ and $ \epsilon>0 $. Again the solution of \eqref{ex2} shall be in the mild sense (after transformation into a first order equation), where $X(t,\cdot)\in H_0^1([0, \pi])$ and $\frac{\partial }{\partial t}X(t,\cdot)\in L^2([0, \pi])$. Formally discretizing \eqref{ex2} like in \cite{redmannbenner2}, the spectral Galerkin-based system is given by a model of the form \eqref{goal_a} with $q=1$. We refer to \cite{redmannbenner2} for the details on the matrices of this system. In our simulations, we assume $ b=1 $ and $ a=2 $. Further, the sizes of spatial and time discretization are $n=1000$ and $\mathcal N=100$, respectively. In this example, we consider the same scenario as we did in the first example \eqref{ex1} which means that we calculate a splitting-based POD ROM using snapshots of subsystems \eqref{stan1} and \eqref{stan2} for some $x_0$,  controls $u$ and a low number of samples $\mathcal N_s$.   Moreover, (splitting-based) $\bar P_T$-based balancing is applied to the wave equation given $H=0.75$. If $H=0.5$, empirical Gramians are replaced by exact pairs of Gramians, meaning that (splitting-based) $P_T$/$Q_T$-based balancing is exploited. The results are shown in Figures \ref{fig:3} and \ref{fig:4} for $u(t)=\sqrt{\frac{2}{\pi}}\sin(t)$.

	Based on our observations, we find that the splitting-based $P_T$/$Q_T$-based balancing (2. Gramian) method outperforms the $P_T$/$Q_T$-based balancing (1. Gramian) method for both cases when $H=0.75$ and $H=0.5$. Additionally, the splitting-based POD performs best for $H=0.75$ and worst for $H=0.5$. The results are again presented in Tables \ref{tab:3} and \ref{tab:4}, where the exact numbers are shown.
	
	Interestingly, for both the heat and the wave equation, splitting-based POD performs best in the Young setting ($H=0.75$), but worst in the Stratonovich case ($H=0.5$).
	\begin{figure}
		\begin{minipage}{75mm}
			\includegraphics[width=\linewidth,height=50mm]{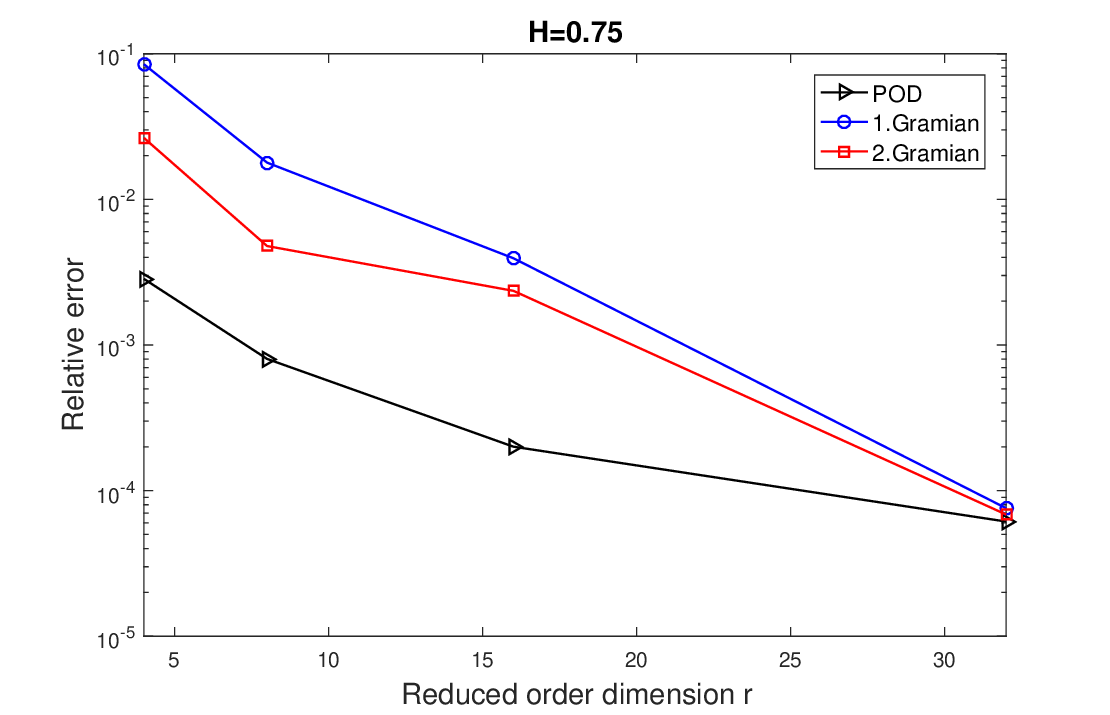}
			\caption{$\mathcal{R}_E $ for three approaches with Hurst parameters $ H=0.75$. }
			\label{fig:3}
		\end{minipage}
		\hfil
		\begin{minipage}{65mm}
			\captionsetup{type=table}
			\caption{$\mathcal{R}_E $ for $ r\in\{4,8,16,32\} $ and $ H=0.75 $.}
			\label{tab:3}
			\begin{tabular}{@{}llll@{}}
				\hline
				$ r $   & POD &  1. Gramian & 2. Gramian  \\
				\hline
				
				4           & $ 2.8447e-03   $ & $ 8.4704e-02   $ & $ 2.6423e-02  $ \\
				
				8         & $ 8.0259e-04  $ & $ 1.7882e-02 $ & $ 4.7821e-03 $ \\
				
				16           & $ 2.0032e-04  $ & $ 3.9414e-03   $ & $ 2.3544e-03  $ \\
				
				32         & $ 6.1316e-05 $ & $ 7.5687e-05   $&$ 6.8516e-05 $  \\
				
				\hline
			\end{tabular}
		\end{minipage}
	\end{figure}
	
	\begin{figure}
		\begin{minipage}{75mm}
			\includegraphics[width=\linewidth,height=50mm]{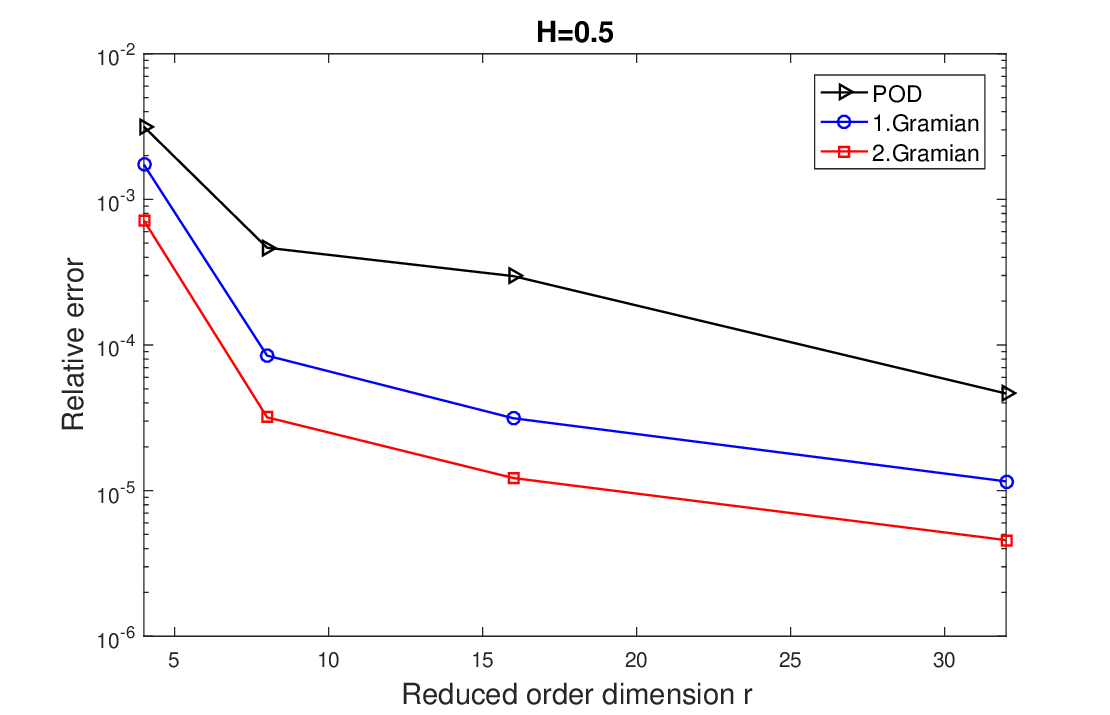}
			\caption{$\mathcal{R}_E $ for three approaches with Hurst parameters $ H=0.5 $. }
			\label{fig:4}
		\end{minipage}
		\hfil
		\begin{minipage}{65mm}
			\captionsetup{type=table}
			\caption{$\mathcal{R}_E $ for $ r\in\{4,8,16,32\} $ and $ H=0.5 $.}
			\label{tab:4}
			\begin{tabular}{@{}llll@{}}
				\hline
				$ r $   & POD &  1. Gramian & 2. Gramian  \\
				\hline
				
				4           & $ 3.1540e-03     $ & $ 1.7312e-03   $ & $ 7.1584e-04   $ \\
				
				8         & $ 4.6545e-04  $ & $ 8.4544e-05  $ & $ 3.1884e-05 $ \\
				
				16           & $ 2.9716e-04 $ & $ 3.1405e-05 $ & $ 1.2200e-05  $ \\
				
				32         & $ 4.6438e-05 $ & $ 1.1572e-05   $&$ 4.5707e-06 $  \\
				
				\hline
			\end{tabular}
		\end{minipage}
	\end{figure}

	\begin{figure}
		\begin{minipage}{75mm}
			\includegraphics[width=68mm,height=50mm]{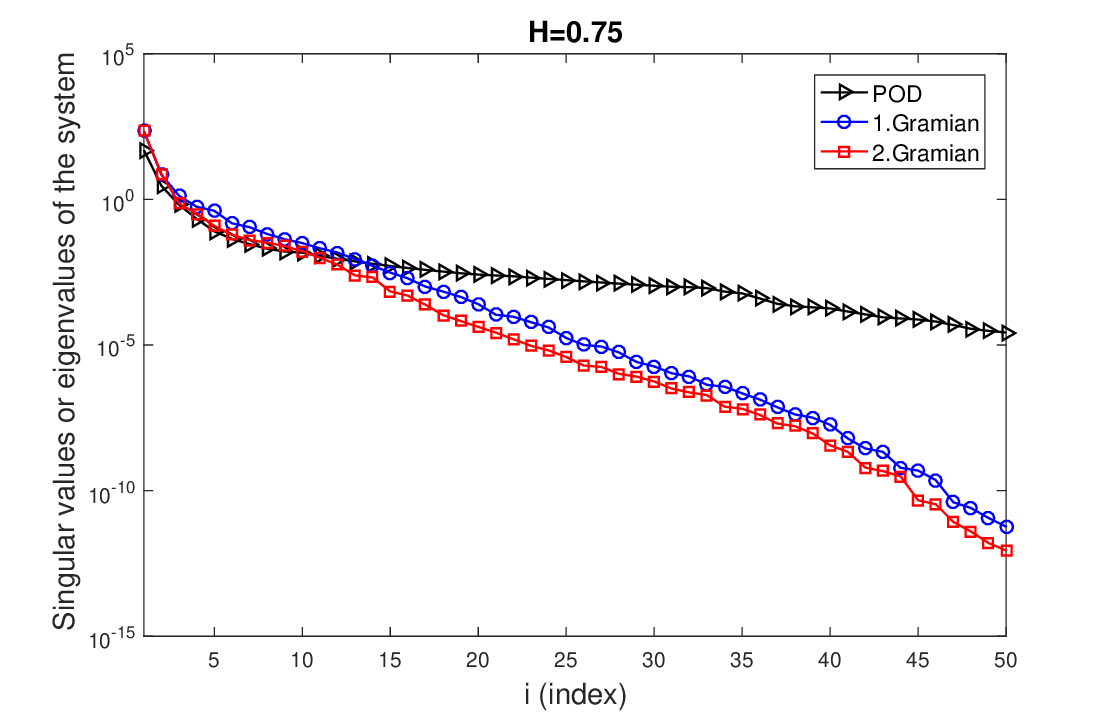}
			\caption{First $50$ POD singular values or eigenvalues associated to $\bar P_T$ for $H=0.75$.}\label{fig:4_1}
		\end{minipage}
		\begin{minipage}{75mm}
			\includegraphics[width=68mm,height=50mm]{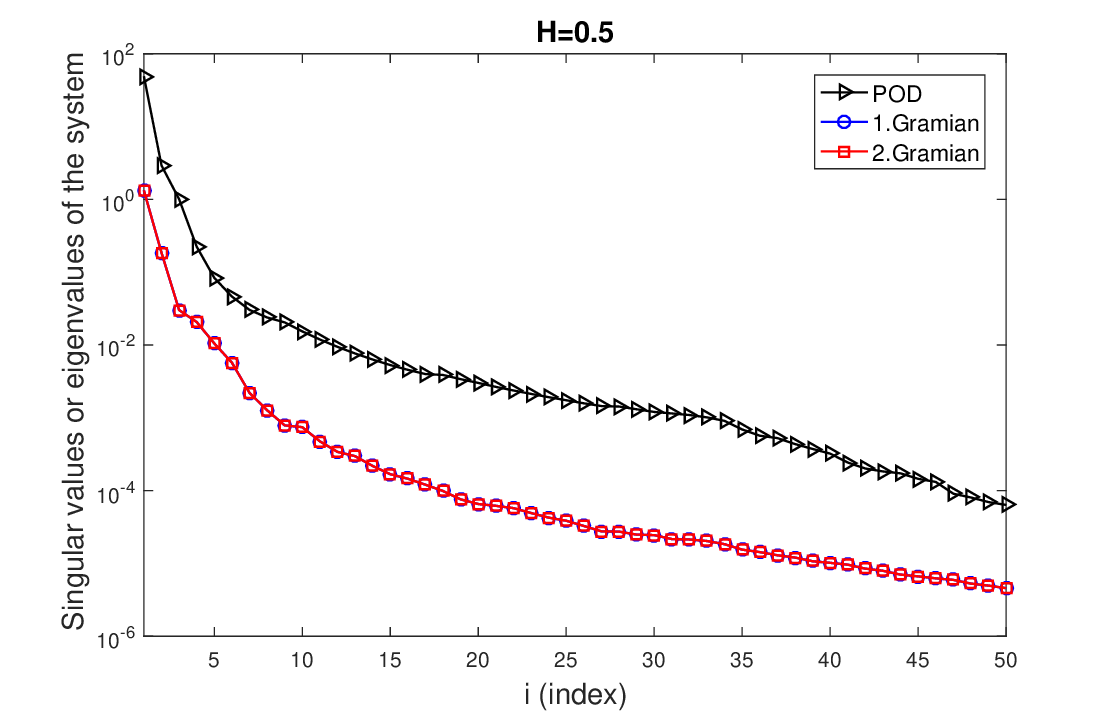}
			\caption{First $50$ POD singular values or eigenvalues associated to $P_T$/$Q_T$ for $H=0.5$.}
			\label{fig:4_2}
		\end{minipage}
	\end{figure}
	
Analogous to Figures \ref{fig:3_1} and \ref{fig:3_2}, Figures \ref{fig:4_1} and \ref{fig:4_2} illustrate the eigenvalues of approximated or exact Gramians as well as the sum of singular values corresponding to the POD snapshot matrices.

	\vspace{\baselineskip}
	%% The style of the following references should be used in all documents.
	
	\bibliographystyle{plain}
	\bibliography{refererences}

\begin{thebibliography}{10}

\bibitem{Alos}
E.~Al{\`o}s and D.~Nualart.
\newblock {Stochastic integration with respect to the fractional Brownian
  motion}.
\newblock {\em Stoch. Stoch. Rep.}, 75(3):129--152, 2003.

\bibitem{morAnt05}
A.~C. Antoulas.
\newblock {\em Approximation of Large-Scale Dynamical Systems}, volume~6 of
  {\em Adv. Des. Control}.
\newblock {SIAM} Publications, Philadelphia, PA, 2005.

\bibitem{beckerhartmann}
S.~Becker and C.~Hartmann.
\newblock {Infinite-dimensional bilinear and stochastic balanced truncation
  with error bounds}.
\newblock {\em Math. Control. Signals, Syst.}, 31:1--37, 2019.

\bibitem{benner2017model}
P.~Benner, A.~Cohen, M.~Ohlberger, and K.~Willcox, editors.
\newblock {\em Model Reduction and Approximation: Theory and Algorithms}.
\newblock SIAM, Philadelphia, PA, 2017.

\bibitem{morBenD11}
P.~Benner and T.~Damm.
\newblock Lyapunov equations, energy functionals, and model order reduction of
  bilinear and stochastic systems.
\newblock {\em {SIAM} J. Control Optim.}, 49(2):686--711, 2011.

\bibitem{redbendamm}
P.~Benner, T.~Damm, M.~Redmann, and Y.~R. Rodriguez~Cruz.
\newblock {Positive Operators and Stable Truncation.}
\newblock {\em Linear Algebra Appl}, 498:74--87, 2016.

\bibitem{redmannbenner}
P.~{Benner} and M.~{Redmann}.
\newblock {Model Reduction for Stochastic Systems.}
\newblock {\em {Stoch PDE: Anal Comp}}, 3(3):291--338, 2015.

\bibitem{damm}
T.~Damm.
\newblock {\em {Rational Matrix Equations in Stochastic Control.}}
\newblock {Lecture Notes in Control and Information Sciences 297. Berlin:
  Springer}, 2004.

\bibitem{Garsia}
M.~Garsia, A.\, E.~Rodemich, and H.~Rumsey~Jr.
\newblock {A real variable lemma and the continuity of paths of some Gaussian
  processes}.
\newblock {\em Indiana Univ. Math. J.}, 20:565--578, 1971.

\bibitem{morGawJ90}
W.~Gawronski and J.~Juang.
\newblock Model reduction in limited time and frequency intervals.
\newblock {\em Int. J. Syst. Sci.}, 21(2):349--376, 1990.

\bibitem{Hong}
J.~Hong, Ch. Huang, and X.~Wang.
\newblock {Optimal rate of convergence for two classes of schemes to stochastic
  differential equations driven by fractional Brownian motions}.
\newblock {\em IMA Journal of Numerical Analysis}, 41(2):1608--1638, 2021.

\bibitem{Hu_Liu_Nualart}
Y.~Hu, Y.~Liu, and D.~Nualart.
\newblock {Rate of convergence and asymptotic error distribution of Euler
  approximation schemes for fractional diffusions}.
\newblock {\em Ann. Appl. Probab.}, 26(2):1147--1207, 2016.

\bibitem{Jamshidi}
N.~Jamshidi and M.~Redmann.
\newblock {Sampling-based model order reduction for stochastic differential
  equations driven by fractional Brownian motion}.
\newblock {\em Proceedings in Applied Mathematics and Mechanics}, 23(1), 2023.

\bibitem{staboriginal}
R.~Z. Khasminskii.
\newblock {Stochastic stability of differential equations.}
\newblock {Monographs and Textbooks on Mechanics of Solids and Fluids.
  Mechanics: Analysis, 7. Alphen aan den Rijn, The Netherlands; Rockville,
  Maryland, USA: Sijthoff \& Noordhoff.}, 1980.

\bibitem{kloeden_platen}
P.~E. Kloeden and E.~Platen.
\newblock {\em {Numerical Solution of Stochastic Differential Equations}}.
\newblock Springer Berlin, 1999.

\bibitem{Mishura}
Y.~S. Mishura.
\newblock {\em {Stochastic Calculus for Fractional Brownian Motion and Related
  Processes}}.
\newblock Springer Berlin, 2008.

\bibitem{morMoo81}
B.~C. Moore.
\newblock Principal component analysis in linear systems: controllability,
  observability, and model reduction.
\newblock {\em {IEEE} Trans. Autom. Control}, AC-26(1):17--32, 1981.

\bibitem{Neuenkirch}
A.~Neuenkirch and I.~Nourdin.
\newblock {Exact rate of convergence of some approximation schemes associated
  to SDEs driven by a fractional Brownian motion}.
\newblock {\em J Theor Probab}, 20:871--899, 2007.

\bibitem{oksendal2013stochastic}
B.~{\O}ksendal.
\newblock {\em {Stochastic differential equations: an introduction with
  applications}}.
\newblock Springer Berlin, 2013.

\bibitem{redmannbenner2}
M.~Redmann and P.~Benner.
\newblock {Approximation and Model Order Reduction for Second Order Systems
  with L\'evy-Noise}.
\newblock {\em AIMS Proceedings}, 2015.

\bibitem{mliopt}
M.~Redmann and M.~A. Freitag.
\newblock {Optimization based model order reduction for stochastic systems}.
\newblock {\em Appl. Math. Comput.}, Volume 398, 2021.

\bibitem{redmann2022gramian}
M.~Redmann and N.~Jamshidi.
\newblock {Gramian-based model reduction for unstable stochastic systems}.
\newblock {\em Control, Signals, and Systems}, 34:855--881, 2022.

\bibitem{one_sided_BT}
M.~Redmann and I.~Pontes~Duff.
\newblock {Full state approximation by Galerkin projection reduced order models
  for stochastic and bilinear systems}.
\newblock {\em Applied Mathematics and Computation}, 420, 2022.

\bibitem{redmann2020runge}
M.~Redmann and S.~Riedel.
\newblock {Runge-Kutta methods for rough differential equations}.
\newblock {\em Journal of Stochastic Analysis}, 3(4), 2022.

\bibitem{wong_zakai_phd}
A.~Shmatkov.
\newblock {\em {Rate of convergence of Wong-Zakai approximations for SDEs and
  SPDEs}}.
\newblock PhD thesis, The University of Edinburgh, 2005.

\bibitem{pod_sde}
T.~M. Tyranowski.
\newblock {Data-driven structure-preserving model reduction for stochastic
  Hamiltonian systems}.
\newblock {\em arXiv preprint:2201.13391}, 2022.

\bibitem{wong_zakai}
E.~Wong and M.~Zakai.
\newblock {On the relation between ordinary and stochastic differential
  equations}.
\newblock {\em International Journal of Engineering Science}, 3(2):213--229,
  1965.

\bibitem{Young}
L.~C. Young.
\newblock {An inequality of Hölder type, connected with Stieltjes
  integration}.
\newblock {\em Acta Math}, 67:251--282, 1936.

\end{thebibliography}
	
\end{document}